\def\thm@space@setup{%
  \thm@preskip=\parskip \thm@postskip=0pt
}
\theoremstyle{plain}
\newtheorem{thm}{Theorem}
\newtheorem{lemma}[thm]{Lemma}
\newtheorem{propn}[thm]{Proposition}
\newtheorem{cor}[thm]{Corollary}
\newtheorem*{Plausible Claim}{Plausible Claim}
\theoremstyle{definition}
\newtheorem{defn}[thm]{Definition}
\newtheoremstyle{example}% ⟨name⟩
{5pt}%⟨Space above⟩
{0pt}%⟨Space below⟩
{}%⟨Body font⟩
{}%⟨Indent amount⟩
{\it}% ⟨Theorem head font⟩
{.}%⟨Punctuation after theorem head⟩
{.5em}%⟨Space after theorem head⟩2
{}%⟨Theorem head spec (can be left empty, meaning ‘normal’)⟩
\theoremstyle{example}
\newtheorem*{rmk}{Remark}
\newtheorem*{ex}{Example}
\newtheorem*{exs}{Examples}
\newcommand{\R}{\mathbb{R}}
\newcommand{\N}{\mathbb{N}}
\renewcommand{\subset}{\subseteq}
\newcommand{\id}{\mathrm{id}}
\newcommand{\red}[1]{\textcolor{red}{#1}}
\newcommand\restr[2]{{% we make the whole thing an ordinary symbol
  \left.\kern-\nulldelimiterspace % automatically resize the bar with \right
  #1 % the function
  \vphantom{\big|} % pretend it's a little taller at normal size
  \right|_{#2} % this is the delimiter
  }}
\title{A Dixmier-Malliavin theorem for Lie groupoids}
\author{
Michael Francis\\
Department of Mathematics\\ 
Pennsylvania State University\\ 
University Park, PA 16802, USA\\
\url{mjf5726@psu.edu}
}
\date{September 27, 2020}
\begin{document}

\maketitle

\begin{abstract}
\noindent 
A famous theorem of Dixmier-Malliavin asserts that every  smooth, compactly-supported function on a Lie group   can be expressed as a finite sum in which each term is the convolution, with respect to Haar measure, of two such functions. We establish that the same holds for a Lie groupoid. Most of the heavy lifting is done by a lemma in the original work of Dixmier-Malliavin. We also need the technology of Lie algebroids and the corresponding notion of  exponential map. As an application,  we obtain a result on the arithmetic of  ideals in the smooth convolution algebra of a Lie groupoid arising from functions  vanishing to given order on an invariant submanifold of the unit space.
\end{abstract}

\section{Introduction}

In  a 1960 paper,  Ehrenpreis~\cite{Ehrenpreis} posed a number of related questions including whether every smooth, compactly-supported function $\varphi \in C_c^\infty(\R^n)$ can be ``deconvolved'' as $\varphi=f*g$ where $f,g \in C_c^\infty(\R^n)$. The latter question became known as the \emph{Ehrenpreis factorization problem}. In 1978, Rubel-Squires-Taylor~\cite{RST} showed that  the answer is ``no'' if $n \geq 3$. In the same year, Dixmier-Malliavin~\cite{Dixmier-Malliavin} showed that the answer is still ``no'' if $n=2$. The remaining case $n=1$ was eventually settled in 1999 by Yulmukhametov~\cite{Yulmukhametov}  who showed the answer is ``yes'' for the real line. Also in the positive direction, \cite{Dixmier-Malliavin} gives  the answer to a weaker form of the factorization question to be ``yes'' for any Lie group whatsoever. This is the \emph{Dixmier-Malliavin theorem}.\footnote{More accurately, one of several closely-related Dixmier-Malliavin theorems.}

\begin{thm}[3.1 Th\'{e}or\`{e}me, \cite{Dixmier-Malliavin}]
Let $G$ be a Lie group and form the smooth convolution algebra $C_c^\infty(G)$. Then, every $\varphi \in C_c^\infty(G)$ can be expressed as $f_1 * \psi_1 + \ldots + f_N * \psi_N$ where $f_i,\psi_i \in C_c^\infty(G)$. Moreover, we can choose this factorization such that, for every $i$,  $\mathrm{supp}(\psi_i) \subset \mathrm{supp}(\varphi)$ and  $\mathrm{supp}(f_i) \subset W$, where $W \subset G$ is a neighbourhood of the identity fixed in advance. 
%\footnote{We follow the convention that the support of a function is always closed. By definition, it is the complement of the largest open set on which the function is zero.}
\end{thm}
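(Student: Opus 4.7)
My plan is to follow the original strategy of Dixmier--Malliavin, whose key step is to exhibit a distributional identity $\delta_e = f + \sum_k a_k L^k g$ for certain $f, g \in C_c^\infty(G)$ supported in $W$ and a rapidly decreasing sequence $(a_k)$, with $L$ a left-invariant elliptic operator; right-convolving this identity with $\varphi$ then yields the desired factorization directly.

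To set things up, I would choose a basis $X_1, \ldots, X_n$ of $\mathfrak{g}$, viewed as left-invariant vector fields on $G$, and form the left-invariant positive elliptic operator $L = 1 - \sum_i X_i^2$. The left-invariance yields the intertwining $(Lf) * \psi = f * (L\psi) = L(f * \psi)$, which is what ultimately lets one shift derivatives from one convolution factor to the other.

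The analytic heart of the argument is the one-dimensional Dixmier--Malliavin key lemma together with its multivariable corollary (obtainable by tensorisation or radial reduction): given any sequence of positive numbers $(a_k)$ tending to $0$ arbitrarily rapidly, one can find $u, v \in C_c^\infty(\mathfrak{g})$ supported in a prescribed neighbourhood $V$ of the origin such that $\delta_0 = u + \sum_{k=1}^\infty a_k L^k v$ as distributions on $\mathfrak{g}$, where $L$ is interpreted as the constant-coefficient operator dual to $X_1, \ldots, X_n$. Shrinking $V$ so that $\exp \colon V \to W$ is a diffeomorphism and pushing forward, one obtains $f, g \in C_c^\infty(G)$ with $\delta_e = f + \sum_k a_k L^k g$ on $G$; the discrepancy between the flat and curved operators contributes a smooth compactly supported error that can be absorbed into $f$ by a bootstrap. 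Right-convolving with $\varphi$ and using left-invariance of $L$ then gives
$$\varphi = f * \varphi + g * \Psi, \qquad \Psi \coloneqq \sum_{k=1}^\infty a_k L^k \varphi.$$
Choosing $(a_k)$ (as the key lemma permits) to decay faster than the reciprocals of the $C^k$ norms of $\varphi$ ensures $\Psi$ converges in $C_c^\infty(G)$, and since $L$ is a differential operator, $\mathrm{supp}(\Psi) \subset \mathrm{supp}(\varphi)$. The support conditions on $f, g \subset W$ hold by construction, so we obtain the factorization with $N=2$.

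The main obstacle is the key lemma itself---a delicate Borel-summation-type construction producing compactly supported smooth functions with prescribed behaviour of their derivatives at the origin. Fortunately it is established once and for all in \cite{Dixmier-Malliavin} and can be cited as a black box; the remaining items (the geometric transfer along $\exp$, absorbing the smooth error from the difference between the flat and group Laplacians, and verifying the support conditions are preserved) are routine in comparison.
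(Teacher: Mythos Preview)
The paper does not prove this theorem directly---it is quoted from \cite{Dixmier-Malliavin}---but it does reprove it implicitly as the special case of Theorem~\ref{mainprecis} in which the groupoid is a group, and that argument is genuinely different from yours. The paper never introduces an elliptic operator. Instead it fixes a basis $X_1,\ldots,X_k$ of $\mathfrak g$, applies the one-dimensional factorization (Theorem~\ref{linefac}, itself a direct consequence of the 1D key lemma, Theorem~\ref{DistribLemma}) along the flow of $X_k$, then applies it again to each resulting $\psi_i$ along $X_{k-1}$, and so on. After $k$ rounds one has $\varphi$ as a sum of $2^k$ terms of the form $\pi_1(f_1)\cdots\pi_k(f_k)\psi$, and Lemma~\ref{horrid} converts each iterated $\R$-convolution into a single $G$-convolution via the exponential chart. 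Your approach packages all directions into $L=1-\sum X_i^2$ and aims for $N=2$; this is closer in spirit to the original \cite{Dixmier-Malliavin}. The paper's coordinate-by-coordinate route costs more terms but generalizes cleanly to groupoids, where no global invariant elliptic operator is available.

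Your outline does have a gap at the transfer step. You obtain $\delta_0 = u + \sum_k a_k \Delta^k v$ on $\mathfrak g$ with the \emph{flat} $\Delta$, push forward through $\exp$, and then claim the discrepancy between $\exp_*\Delta$ and the left-invariant $L$ is a smooth compactly supported error ``absorbed into $f$ by a bootstrap.'' But that error is $\sum_k a_k\bigl((\exp_*\Delta)^k-L^k\bigr)g$, an infinite sum of operators of order $2k$ applied to $g$, and its convergence in $C_c^\infty$ is not automatic: the scalars $a_k$ are fixed by the key lemma \emph{before} $g$ is produced, so you cannot retroactively shrink them to beat the growth of $\|(\exp_*\Delta)^k g - L^k g\|_{C^m}$. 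This circularity can be broken---e.g.\ by a priori bounds on the $C^m$ norms of the $g$ the key lemma outputs, or by working with $L$ on $G$ from the start and avoiding the Lie-algebra detour---but it is not routine and must be argued. The paper's one-flow-at-a-time method sidesteps this entirely: for a single $\R$-action there is no curvature mismatch to absorb.
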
 

The main goal of this paper is to establish the following analogous result for Lie groupoids. 

\begin{thm}\label{main}
Let $G$ be a Lie groupoid and form the smooth convolution algebra $C_c^\infty(G)$. Then, every $\varphi \in C_c^\infty(G)$ can be decomposed as $f_1 * \psi_1 + \ldots + f_N * \psi_N$, where $f_i,\psi_i \in C_c^\infty(G)$. Moreover, we can choose this factorization such that, for every $i$, $\mathrm{supp}(\psi_i) \subset \mathrm{supp}(\varphi)$ and  $\mathrm{supp}(f_i) \subset W$, where $W \subset G$ is a neighbourhood of the unit space of $G$ fixed in advance.
\end{thm}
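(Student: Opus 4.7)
My plan is to reduce to the classical Dixmier-Malliavin theorem quoted above by localizing via the exponential map of the Lie algebroid $A = A(G)$. I would fix a spray on $A$ and form the associated exponential map $\exp : A \to G$; restricted to a suitable neighbourhood of the zero section, this is a diffeomorphism onto an open neighbourhood $U$ of the unit space $G^{(0)}$, and by shrinking I may assume $U \subset W$. The point is that, under $\exp$, fibrewise scalar rescaling on $A$ corresponds (near the units) to the flow of right-invariant vector fields on $G$, so that convolution against a function supported in $U$ is controlled by the flows of sections of $A$ exactly as in the Lie group case it is controlled by one-parameter subgroups.

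Next, I would cover $\mathrm{supp}(\varphi)$ by finitely many open sets $V_\alpha$ small enough that, on a neighbourhood in $G^{(0)}$ of $s(\overline{V_\alpha}) \cup t(\overline{V_\alpha})$, the algebroid $A$ admits a local frame $X_1^\alpha, \ldots, X_r^\alpha$. Choosing a smooth partition of unity $\{\rho_\alpha\}$ on $G$ subordinate to $\{V_\alpha\}$ and writing $\varphi = \sum_\alpha \rho_\alpha \varphi$, it suffices by bilinearity of $*$ to factor each piece $\rho_\alpha \varphi$ separately. Crucially, $\mathrm{supp}(\rho_\alpha \varphi) \subset V_\alpha \cap \mathrm{supp}(\varphi)$, which is what will eventually make the containment $\mathrm{supp}(\psi_i) \subset \mathrm{supp}(\varphi)$ achievable.

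For each $\alpha$, I would transport the factorization problem to the $r$-dimensional local model cut out by the frame $X_1^\alpha, \ldots, X_r^\alpha$, using the flows of the associated right-invariant vector fields on $G$ to realize convolution against functions supported near the units as iterated smoothing in the $r$ coordinate directions. The main analytic lemma of \cite{Dixmier-Malliavin} --- the part the authors themselves flag as doing the heavy lifting --- then applies in each direction, producing a finite factorization $\rho_\alpha \varphi = \sum_j f_{\alpha,j} * \psi_{\alpha,j}$ with $\mathrm{supp}(f_{\alpha,j}) \subset U \subset W$ and $\mathrm{supp}(\psi_{\alpha,j})$ arbitrarily close to $\mathrm{supp}(\rho_\alpha\varphi)$. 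Summing the pieces over $\alpha$ gives a global factorization of $\varphi$ with the desired control on the supports of the $f_i$.

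The step I expect to be hardest is refining this into the strict containment $\mathrm{supp}(\psi_i) \subset \mathrm{supp}(\varphi)$ rather than merely an arbitrarily small enlargement of it. Convolving with a function supported in a tiny neighbourhood of the units inevitably spreads the other factor slightly, so to force the $\psi_i$'s to sit inside $\mathrm{supp}(\varphi)$ itself one must, as in the Lie group argument, insert cutoffs manufactured from $\varphi$ at each stage of the Dixmier-Malliavin iteration, arranging that the accumulated remainder functions never leak out of $\mathrm{supp}(\varphi)$. Once this bookkeeping is carried out inside each local chart, the chartwise decompositions reassemble into the required global factorization on $G$.
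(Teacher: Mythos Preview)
Your overall strategy matches the paper's: localize so that $A(G)$ is trivialized by a frame $X_1,\ldots,X_k$, use the flows of the associated right-invariant vector fields to parametrize a tube around the units (the paper uses the explicit product-of-flows map $u(t_1,\ldots,t_k,b)=e^{t_1X_1}\cdots e^{t_kX_k}b$ rather than a spray-based exponential, but this is cosmetic), and then iterate the one-dimensional Dixmier--Malliavin lemma once for each coordinate direction, reassembling via Lemma~\ref{horrid} into a genuine $G$-convolution supported in $W$.

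Where you go astray is in your final paragraph. You expect the strict inclusion $\mathrm{supp}(\psi_i)\subset\mathrm{supp}(\varphi)$ to be the delicate point, requiring carefully inserted cutoffs to prevent ``leakage''. In fact it is free. Look at how the one-dimensional step (Theorem~\ref{linefac}) actually works: from $\delta=g+\sum_k a_k f^{(k)}$ one gets
\[
\varphi=\pi(g)\varphi+\pi(f)\Bigl(\textstyle\sum_k a_k X^k\varphi\Bigr),
\]
so the two $\psi$'s are $\varphi$ itself and a uniformly convergent series of \emph{derivatives} $X^k\varphi$. Differentiation never enlarges support, so $\mathrm{supp}(\psi_i)\subset\mathrm{supp}(\varphi)$ holds exactly, not merely approximately. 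When you iterate over $X_k,X_{k-1},\ldots,X_1$, each new $\psi$ is obtained by differentiating a previous $\psi$, so the containment propagates. The spreading you are worried about happens only to the $f_i$'s, and that is harmless because their supports are merely required to lie in the fixed open set $W$. No cutoffs or extra bookkeeping are needed.
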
 
 
%There are some small imprecisions in the above statement. 
Note that defining a convolution product on $C_c^\infty(G)$ requires a choice of Haar system but, as different Haar systems lead to canonically isomorphic algebras, issues relating to factorization do not depend on this choice. One could also avoid making a choice entirely by working with appropriate  densities in place of functions. Additionally, the open set $W$ does not  actually need to contain the whole unit space of $G$, only a suitable compact subset. These technicalities, and others, shall be entered into in more detail later in the document. A more comprehensive  statement is given in  Theorem~\ref{mainprecis}.

We furthermore apply our Dixmier-Malliavin theorem to obtain results  on the arithmetic  of certain ideals in  the smooth convolution algebra of a Lie groupoid. Given a closed, invariant submanifold $X$ of the unit space of a  Lie groupoid $G$, one obtains ideals $J_p \subset C_c^\infty(G)$ consisting of the functions which vanish to order $p$ on the restricted groupoid $G_X \subset G$. Our main findings here are:
\begin{align*}
J_\infty * J_\infty = J_\infty&& (J_1)^{*p} = J_p.
\end{align*}
In practical terms, this means that a function vanishing   to infinite order on $G_X$ can be written as a finite sum in which each term is a convolution of two functions vanishing to infinite order on $G_X$, and that a function  vanishing to $p$th order on $G_X$ can be written as  a finite sum in which each term is a $p$-fold convolution of functions which vanish on $G_X$. Note these results on ideals  are only interesting after one has generalized to the groupoid setting. In the group case, the unit space consists of a single point and these  ideals  do not arise at all. 

% See Theorem~\ref{applthm}. %It is with such simplifications as our goal that the present article is being written.

Elsewhere (\cite{Francis}), we apply the results of this article to investigate the structure of the smooth  convolution algebra $C_c^\infty(G)$ when $G$ is the holonomy groupoid of a singular foliation (\cite{Pradines}, \cite{Pradines-Bigonnet}, \cite{Debord}, \cite{AS}). The holonomy groupoid of  the singular foliation  of $\R$ given by vector fields that vanish to $k$th order  at the origin (\cite{AS}, Example~1.3~(3)) is isomorphic to the transformation groupoid $\R \rtimes_k \R$ of an appropriate flow fixing the origin to $k$th order. We show, by way of an analysis of the ideal structure, that the smooth algebras $C_c^\infty(\R \rtimes_k \R)$ are pairwise nonisomorphic. The C*-completions, on the other hand, fall into two isomorphism classes according to the parity of $k$. This demonstrates that, even in very simple cases,  the smooth algebra of a singular foliation can contain information   that is  washed away after one passes to the  foliation C*-algebra.

We now explain the organization of this article. In Section~2, we use a key lemma of Dixmier-Malliavin to obtain, as a corollary, the preliminary factorization result Theorem~\ref{linefac}. This allows us, given a smooth $\R$-action on a manifold $M$,  to express functions on $M$ as two-term sums in which both terms are the convolution of a function on $\R$ with a function on $M$. Section~3 largely serves a notational  purpose. In it, we lay out our conventions for the Lie algebroid of a Lie groupoid, for Haar measures, and for Lie groupoid actions. Section~4 is quite technical and culminates with  Lemma~\ref{horrid}, giving a framework in which functions on $\R$ can be convolved to yield a function on  a Lie groupoid with appropriate properties. In Section~5, we derive the main result, Theorem~\ref{mainprecis}.  Section~6 prepares the ground for the final section by analyzing the product structure of ideals in the (commutative) algebra of smooth functions on a manifold under pointwise multiplication. This is again somewhat technical, though we do make use  of an interesting inversion principle which connects Schwartz functions to bump functions (Lemma~\ref{corresp}). Finally, in Section~7, we obtain our results on the product structure of  ideals in the smooth convolution algebra of a Lie groupoid. The main finding here is  Theorem~\ref{J2=J}.

%In Section~6, we apply Theorem~\ref{mainprecis} to  prove Theorem~\ref{applthm}, which gives more detailed information about factorization when vanishing order with respect to an invariant submanifold of the unit space is taken into account.  

\section{Dixmier-Malliavin for $\R$ actions}

Let $X$ be a complete  vector field on a smooth manifold $M$. We denote by $t \mapsto e^{tX}$ the corresponding $1$-parameter group of diffeomorphisms (in other words the flow) that is related to $X$ by
\[ \tfrac{d}{dt}\Big|_{t=0} f(e^{tX}m) = (Xf)(m). \]
This action  of $\R$ on $M$ can alternatively be encoded by a representation $\pi$ of the smooth convolution algebra $C_c^\infty(\R)$ on $C_c^\infty(M)$ which  we call the \emph{integrated form} of the action.  This representation $\pi$ is defined by the following formula:
\begin{align}\label{Rint}
(\pi(f)\psi )(m) = \int_\R f(-t) \psi (e^{tX}m) \ dt. 
\end{align}

\begin{comment}
\red{There ought to be some attention given to extending this representation of $\pi$ to (compactly-supported) distributions on $\R$, but I'm not really sure what would be efficient. It doesn't seem appropriate to give much detail, since in some sense the role of the distributions is essentially just notational, but still something should be said... Also needing mention is the formula}
\begin{align}
\red{\pi( f')\varphi = \pi(f) (X \varphi)}
\end{align}
\end{comment}

The essential ingredient in our Lie groupoid generalization of Dixmier-Malliavin is the following preliminary factorization result:  
\begin{thm}\label{linefac}
Let $\R$ act  smoothly on a manifold $M$ via a complete vector field $X$ and let $\pi$ be the representation of $C_c^\infty(\R)$ on $C_c^\infty(M)$ defined by \eqref{Rint}. Then, for any $\varphi \in C_c^\infty(M)$, there exists $f_0,f_1 \in C_c^\infty(\R)$ and $\psi_0, \psi_1 \in C_c^\infty(M)$ such that 
\begin{align}\label{fact}
\varphi = \pi(f_0)\psi_0+ \pi(f_1) \psi_1.
\end{align} Moreover, this factorization can be taken such that, for $i=0,1$,  $\mathrm{supp}(\psi_i) \subset \mathrm{supp}(\varphi)$ and $\mathrm{supp}(f_i) \subset (-\epsilon, \epsilon)$, where $\epsilon > 0$ is fixed in advance.
\end{thm}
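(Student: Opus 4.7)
The strategy is to recognize $\varphi$ as a $C^\infty$-vector for the representation $\pi$ and apply a suitable vector-valued version of the key lemma from Dixmier-Malliavin's 1978 paper.

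First, observe that the orbit map $F \colon \R \to C_c^\infty(M)$, $F(t) := \varphi \circ e^{-tX}$, is smooth into the Fr\'echet subspace $C_K^\infty(M) \subset C_c^\infty(M)$ of smooth functions with support in $K$, where $K$ is any compact set containing $\bigcup_{|t|\le \epsilon} e^{-tX}\mathrm{supp}(\varphi)$. This uses completeness of $X$, smoothness of the flow, and compactness of $\mathrm{supp}(\varphi)$. Differentiation in $t$ corresponds to applying $X$ inside $C_c^\infty(M)$, so all derivatives of $F$ take values in the same Fr\'echet space.

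Next, I would apply the vector-valued form of the Dixmier-Malliavin key lemma. In the form most useful here, it states that for every Fr\'echet space $E$ carrying a smooth $\R$-representation $\rho$ and every $C^\infty$-vector $v \in E$, one has $v = \rho(f_0)w_0 + \rho(f_1)w_1$ for some $f_0, f_1 \in C_c^\infty((-\epsilon,\epsilon))$ and smooth vectors $w_0, w_1 \in E$. Specializing to $E = C_c^\infty(M)$, $\rho = \pi$, and $v = \varphi$ gives the desired identity \eqref{fact}. The containment $\mathrm{supp}(f_i) \subset (-\epsilon, \epsilon)$ is built directly into the lemma.

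The principal obstacle is the sharp support containment $\mathrm{supp}(\psi_i) \subset \mathrm{supp}(\varphi)$, rather than the a priori weaker bound $\mathrm{supp}(\psi_i) \subset K$. For this one must use the full Paley-Wiener strength of Dixmier-Malliavin's construction — the delicate choice of $f_0, f_1$ so that $\hat{\varphi}$ can be split and divided by $\hat{f}_0$ and $\hat{f}_1$ respectively to yield entire functions of exponential type still matching $\mathrm{supp}(\varphi)$. I would extract this refined version of the key lemma from their paper and verify that it operates in the vector-valued setting at hand, effectively by applying it uniformly over the transverse slices of flow-box neighborhoods of $\mathrm{supp}(\varphi)$ and using that $\varphi$ vanishes identically outside its support. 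Once this is in place, reading off $\psi_i$ as the image of $\varphi$ under the associated operator produced by the lemma yields the stated factorization.
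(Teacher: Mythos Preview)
Your general strategy---invoke the Dixmier--Malliavin key lemma---is the right one, and you correctly identify the sharp support containment $\mathrm{supp}(\psi_i)\subset\mathrm{supp}(\varphi)$ as the main issue. However, your proposed fix for that issue is both unnecessarily complicated and, as written, not quite coherent: there is no Fourier transform $\hat\varphi$ to speak of, since $\varphi$ lives on the manifold $M$ rather than on $\R$. One could try to salvage this by passing to flow-box charts and Fourier-transforming in the flow direction, as you hint, but this requires partitions of unity, control of the transverse dependence, and a gluing step, none of which you have actually supplied. At best this would be a substantial detour.

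The paper avoids all of this by not treating the Dixmier--Malliavin lemma as a black box. It uses the explicit distributional form (their Lemma~2.5): given any positive sequence $(c_k)$, there exist $f,g\in C_c^\infty(-\epsilon,\epsilon)$ and scalars $|a_k|\le c_k$ with $\delta=g+\sum_k a_k f^{(k)}$. Choosing the $c_k$ small enough that $\sum_k a_k X^k\varphi$ converges in $C^\infty$, and using the integration-by-parts identity $\pi(f^{(k)})\varphi=\pi(f)X^k\varphi$, one reads off directly
\[
\varphi=\pi(g)\varphi+\pi(f)\Big(\sum_k a_k X^k\varphi\Big),
\]
so that $\psi_1=\varphi$ and $\psi_0=\sum_k a_k X^k\varphi$. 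The support containment is then automatic, because differential operators never enlarge supports: $\mathrm{supp}(X^k\varphi)\subset\mathrm{supp}(\varphi)$ for every $k$, hence the same holds for the limit $\psi_0$. No Paley--Wiener analysis, no flow-box decomposition, and no vector-valued subtleties are needed. The moral is that opening up the proof of the abstract lemma, rather than quoting its conclusion, is exactly what delivers the support control for free.
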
 
Although Theorem~\ref{linefac} is not recorded in \cite{Dixmier-Malliavin}, it is a straightforward consequence of the results and techniques therein. The main challenge is to achieve the factorization~\eqref{fact} with smooth $f_i$. It is far easier to achieve the factorization if the $f_i$ are only required to be differentiable of a large, but finite, order (see \cite{Cartier}, pp. 23). The means by which  this simpler result is achieved suggest a strategy for the more difficult result,  so it seems worthwhile to provide a short outline here.
\begin{lemma}\label{Ck}
For every nonnegative integer $k$, there exists $f \in C_c^k(\R)$,  $g \in C^\infty_c(\R)$ such that  
\[ \delta = f^{(k+2)}+g \]
where $\delta$ denotes the delta  distribution at $0$.
\end{lemma}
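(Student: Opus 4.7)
The plan is to realize the delta distribution as a high-order derivative of a $C^k$ function explicitly, then cut off to obtain compact support. The key observation is that the function
\[ F(x) = \frac{x_+^{k+1}}{(k+1)!}, \qquad x_+ := \max(x,0), \]
lies in $C^k(\R)$ and satisfies $F^{(k+2)} = \delta$ in the sense of distributions. Indeed, differentiating $k+1$ times in the distributional sense yields the Heaviside function $H(x) = \mathbf{1}_{x>0}$, and one more derivative gives $\delta$.

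Next I would choose a smooth cutoff $\chi \in C_c^\infty(\R)$ that is identically $1$ on a neighbourhood of $0$ and set $f := \chi F$. Since $F \in C^k$ and $\chi \in C_c^\infty$, this $f$ clearly lies in $C_c^k(\R)$. Now I would compute $f^{(k+2)}$ distributionally by the Leibniz rule,
\[ f^{(k+2)} = \sum_{j=0}^{k+2} \binom{k+2}{j} \chi^{(j)} F^{(k+2-j)}, \]
valid because $\chi$ is smooth and the product of a smooth function with a distribution is well-defined. The $j=0$ term is $\chi \cdot \delta = \chi(0)\delta = \delta$, since $\chi \equiv 1$ near the origin.

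The final step is to argue that every remaining term ($j \geq 1$) is in $C_c^\infty(\R)$. For $j \geq 1$, the factor $F^{(k+2-j)}$ is of the form $x_+^{m}/m!$ with $m = k+1-j \geq 0$ (or the Heaviside function when $j=1$), hence smooth away from $0$. The factor $\chi^{(j)}$ vanishes identically on the neighbourhood of $0$ where $\chi$ is constant, so the product is smooth across the origin as well; compact support is inherited from $\chi$. Setting
\[ g := -\sum_{j=1}^{k+2} \binom{k+2}{j} \chi^{(j)} F^{(k+2-j)} \in C_c^\infty(\R), \]
we obtain the required identity $\delta = f^{(k+2)} + g$.

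I do not anticipate any genuine obstacle here: once one thinks of $x_+^{k+1}/(k+1)!$ as a distributional antiderivative of $\delta$ of order $k+2$, the rest is a careful bookkeeping via the Leibniz rule, where the point is that $\chi$ being locally constant at $0$ precisely isolates the singular contribution in the $j=0$ term and leaves a smooth remainder.
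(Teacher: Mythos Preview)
Your proof is correct and essentially the same as the paper's: both start from $F(x)=x_+^{k+1}/(k+1)!$ with $F^{(k+2)}=\delta$ and then modify $F$ to have compact support---you multiply by a cutoff $\chi$, the paper subtracts a smooth $G$ agreeing with $F$ outside a bounded interval, which is the same thing with $G=(1-\chi)F$. One harmless indexing slip: for $j\geq 1$ the exponent in $F^{(k+2-j)}=x_+^{m}/m!$ is $m=j-1$, not $m=k+1-j$ (and the Heaviside case is $m=0$, i.e.\ $j=1$); this does not affect the argument since all that matters is smoothness away from $0$.
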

\begin{proof}
Antidifferentiate the delta distribution $k+2$ times, always picking the antiderivative that vanishes on the negative half line. The result is the $C^k$ function  $F$ vanishing on the negative-half line and satisfying $F(t) = \frac{1}{(k+1)!} t^{k+1}$ for $t \geq  0$. 
Of course, $F^{(k+2)} = \delta$, but $F$ is not compactly-supported. Let $G$ be a $C^\infty$ function that agrees with $F$ outside of a bounded interval. Then,
$\delta = F^{(k+2)} = (F-G)^{(k+2)} + G^{(k+2)} = f^{(k+2)} + g$, 
where $f =F-G \in C_c^k(\R)$ and $g = G^{(k+2)} \in C_c^\infty(\R)$.
\end{proof}
This elementary lemma has a weak  version of Theorem~\ref{linefac} as a corollary. Note   the representation $\pi$ defined by \eqref{Rint} still makes sense for functions that aren't  smooth, but only, say, continuous.  Moreover, $\pi(f) \psi$  still belongs to $C_c^\infty(M)$  when $f \in C_c (\R)$, provided $\psi \in C_c^\infty(M)$. We can even extend the representation $\pi$ to compactly-supported distributions on $\R$, for instance $\pi(\delta) = \mathrm{id}_{C_c^\infty(M)}$. This is mainly a notational point---it is entirely possible to eliminate distributions from the discussion. However, because the corollary below is only being included for motivational reasons, it hardly seems worth it to do so.
\begin{cor}
Let $X$ be a complete vector field on a manifold $M$ and let $\pi$ be the   representation of $C_c(\R)$ on $C_c^\infty(M)$ defined by \eqref{Rint}. Then, for any $\varphi \in C_c^\infty(M)$ and any integer $k \geq 0$, one can write 
\[ \varphi =\pi(f_0) \psi_0 + \pi(f_1) \psi_1 \] 
where $f_0 \in C_c^k(\R)$, $f_1 \in C_c^\infty(\R)$ and $\psi_0,\psi_1 \in C_c^\infty(M)$. 
\end{cor}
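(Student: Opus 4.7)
The plan is to derive this corollary directly from Lemma~\ref{Ck} by combining the decomposition of the delta distribution with the integration-by-parts identity for the representation $\pi$. The starting observation is that, on $C_c^\infty(M)$, the distribution $\delta$ acts as the identity, so $\varphi = \pi(\delta) \varphi$ for any smooth, compactly-supported $\varphi$.

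First I would record the key identity
\[ \pi(f') \psi = \pi(f)(X \psi), \]
valid for $f$ of class $C^1$ and $\psi \in C_c^\infty(M)$. This follows from a one-line integration by parts applied to the defining formula \eqref{Rint}: differentiating under the integral sign and using that $\tfrac{d}{dt} \psi(e^{tX} m) = (X \psi)(e^{tX} m)$ transfers the derivative from $f$ onto $\psi$ with no boundary contribution (since $f$ is compactly supported). Iterating yields $\pi(f^{(k+2)}) \psi = \pi(f)(X^{k+2} \psi)$.

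Next I would invoke Lemma~\ref{Ck} to write $\delta = f^{(k+2)} + g$ with $f \in C_c^k(\R)$ and $g \in C_c^\infty(\R)$. Applying $\pi(\cdot) \varphi$ to both sides, and using the identity above with $\psi = \varphi$, gives
\[ \varphi \;=\; \pi(\delta) \varphi \;=\; \pi(f^{(k+2)}) \varphi + \pi(g) \varphi \;=\; \pi(f)\bigl( X^{k+2} \varphi \bigr) + \pi(g) \varphi. \]
Setting $f_0 = f$, $\psi_0 = X^{k+2} \varphi$, $f_1 = g$ and $\psi_1 = \varphi$ produces the required factorization, since $X^{k+2} \varphi \in C_c^\infty(M)$ whenever $\varphi \in C_c^\infty(M)$.

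There is no real obstacle here once Lemma~\ref{Ck} is in hand; the only subtlety worth flagging is the bookkeeping involving $\pi(\delta)$. Since the corollary is included only for motivation, I would either spell out the extension of $\pi$ to compactly-supported distributions briefly or sidestep it by noting that the calculation above can be phrased entirely in terms of the continuous map $\pi$ on $C_c(\R)$ together with the integration-by-parts identity, never genuinely using the distributional formalism.
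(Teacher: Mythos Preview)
Your proof is correct and follows essentially the same route as the paper: apply Lemma~\ref{Ck} to write $\delta = f^{(k+2)} + g$, use the integration-by-parts identity $\pi(h')\psi = \pi(h)X\psi$ iteratively, and read off $f_0=f$, $\psi_0 = X^{k+2}\varphi$, $f_1=g$, $\psi_1=\varphi$. Your remark about the distributional bookkeeping around $\pi(\delta)$ also mirrors the paper's own caveat.
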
 
\begin{proof}[Proof sketch]
Applying the above lemma, we can write $\delta = f^{(k+2)} + g$ where $f \in C^k(\R)$, $g \in C_c^\infty(\R)$.  Noting the relation $\pi(h') \psi =\pi(h)X\psi$ (an instance of the integration by parts formula), we get $\varphi = \pi(\delta)\varphi =\pi(f^{(k+2)})\varphi + \pi(g) \varphi = \pi(f)X^{k+2}\varphi + \pi(g) \varphi$ so, setting $f_0 =f$, $f_1=g$, $\psi_0 = X^{k+2} \varphi$, $\psi_1=\varphi$, we are finished.
\end{proof}

A major innovation of \cite{Dixmier-Malliavin} is to achieve approximate representations of $\delta$, analogous to that of Lemma~\ref{Ck}, but in terms of $C^\infty$ functions. Precisely, they prove the following. 

\begin{thm}[2.5 Lemme, \cite{Dixmier-Malliavin}]\label{DistribLemma}
Given any positive sequence $c_k > 0$, there exist functions $f,g \in C_c^\infty(\R)$ and scalars $a_k$ with $|a_k| \leq c_k$ such that 
\[ \delta = g + \sum_{k=1}^\infty a_k f^{(k)}, \] 
where the convergence is in the sense of compactly-supported distributions.\footnote{Explicitly, $h_n \in C_c^\infty(\R)$ converge to $\delta$ in the distributional sense if their supports are uniformly bounded and $\int h_n(x) f(x) \ dx \to f(0)$ for every $f \in C_c^\infty(\R)$. }
\end{thm}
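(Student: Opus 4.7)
The plan is to pass to the Fourier side, where the problem becomes a question about division of entire functions of exponential type. Setting $Q(\xi) := \sum_{k=1}^\infty a_k (i\xi)^k$ and taking Fourier transforms, the distributional identity $\delta = g + \sum a_k f^{(k)}$ translates (formally) into
\[ \hat f(\xi)\, Q(\xi) = 1 - \hat g(\xi). \]
Thus it suffices to produce a compactly supported smooth $f$ and a distribution $\mu = \sum a_k \delta^{(k)}$ supported at $0$ (equivalently, an entire function $Q$ of exponential type zero, with Taylor coefficients $a_k$ at the origin satisfying $|a_k| \leq c_k$) such that the convolution $\mu * f$ equals $\delta$ modulo some $g \in C_c^\infty(\R)$.

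First I would fix a bump $f \in C_c^\infty(\R)$ with $\int f = 1$, so that $\hat f(0) = 1$; then I would select $g \in C_c^\infty(\R)$ arranging (i) $\hat g(0) = 1$ and (ii) $\hat g(\zeta) = 1$ at each zero $\zeta$ of $\hat f$, with the correct multiplicity. Since $\hat f$ has a discrete zero set and Paley--Wiener functions are flexible enough to interpolate at any discrete set, this is achievable. With this matching, the quotient $Q := (1 - \hat g)/\hat f$ has no poles, extends to an entire function, and corresponds to a distribution $\mu$ supported at the origin.

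The remaining task is to arrange $|a_k| \leq c_k$, where the $a_k$'s are read off from $Q(\xi) = \sum a_k (i\xi)^k$. This is the technical heart of the lemma: using the additional freedom in $f$ and $g$, one prescribes the Taylor expansions of $\hat f$ and $\hat g$ at the origin (via Borel's theorem) so as to drive the Taylor coefficients of the quotient down below the prescribed threshold. Once $|a_k| \leq c_k$ is achieved, the partial sums $\sum_{k=1}^N a_k f^{(k)}$, all supported inside $\mathrm{supp}(f)$, converge as compactly supported distributions to $\mu * f = \delta - g$, which is verified by pairing with $\phi \in C_c^\infty(\R)$ and using the rapid decay of $a_k$ to justify termwise summation against $\langle f^{(k)}, \phi\rangle = (-1)^k \int f\, \phi^{(k)}$.

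The main obstacle is precisely this last control step: simultaneously realizing the interpolation constraints on $\hat g$ at the zeros of $\hat f$ \emph{and} forcing the Taylor coefficients of $(1 - \hat g)/\hat f$ to decay faster than an arbitrary prescribed sequence $(c_k)$, all while keeping $f, g$ compactly supported and smooth. Balancing the Paley--Wiener constraints against the formal power series constraints is the real analytic innovation of Dixmier--Malliavin, and everything else in the argument is essentially bookkeeping around this delicate construction.
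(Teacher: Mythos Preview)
The paper does not prove this statement; it is quoted from \cite{Dixmier-Malliavin}, and the surrounding text only sketches the original method, pointing out that the crucial ingredient is an infinite product $P(z) = \prod_i (1 + z^2/\lambda_i^2)$ with the $\lambda_i$ chosen so that $1/P$, restricted to $\R$, lies in the Schwartz space.

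Your Fourier-side reformulation is reasonable in spirit, but the argument has a genuine gap at exactly the step you yourself flag as the ``technical heart.'' You propose to control the Taylor coefficients of $Q = (1 - \hat g)/\hat f$ at the origin by ``prescribing the Taylor expansions of $\hat f$ and $\hat g$ at the origin via Borel's theorem.'' But $\hat f$ and $\hat g$ are \emph{entire} functions (Paley--Wiener), so their Taylor series at $0$ determine them globally; Borel's theorem, which produces merely smooth functions with prescribed jets, is inapplicable here. Once you have fixed $f$ as a bump and pinned $g$ down by interpolation at the zeros of $\hat f$, there is no residual local freedom of the kind you invoke, and the tension you name in your last paragraph---matching global Paley--Wiener constraints against an arbitrarily prescribed decay rate for the Taylor coefficients of the quotient---is not resolved by anything in your outline.

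The actual Dixmier--Malliavin argument, as the paper indicates, runs in the opposite order: rather than fixing $f$ and attempting to solve for $Q$ by division, one \emph{first} constructs the entire function $P$ as the explicit infinite product above, with the $\lambda_i$ growing fast enough that the coefficients of $P$ (hence the $a_k$) automatically satisfy the prescribed bound, and \emph{then} obtains $f$ from the Schwartz function $1/P$. This avoids the interpolation/division problem altogether. One smaller point: a genuine distribution supported at $\{0\}$ is necessarily a \emph{finite} combination of derivatives of $\delta$, so your $\mu = \sum_k a_k \delta^{(k)}$ should be understood only as a formal device, not as a distribution.
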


Note that an obvious rescaling argument implies that the functions $f,g$ of the above theorem can be taken to have support contained in $(-\epsilon, \epsilon)$, for any $\epsilon > 0$.

In spite of its simplicity, the methods by which the above statement about 1-dimensional distributions is proved are highly nontrivial.  As Casselman put it in his  exposition \cite{Casselman}, ``Its proof is very intricate, a real tour de force.'' The crucial point turns out to be to find conditions on the growth of a sequence  $\lambda=\{ 0 < \lambda_1 < \lambda_2 < \ldots\}$ that will  guarantee  that the function $\chi_\lambda$ defined as the restriction to $\R$ of the reciprocal of the entire function represented by the infinite product $\prod (1+\frac{z^2}{\lambda_i^2})$ defines a function in the Schwartz space $\mathcal{S}(\mathbb{R})$. Functions of this type are used to prove a Schwartz function analog of Theorem~\ref{DistribLemma} which, in turn, is used to prove Theorem~\ref{DistribLemma}.

 Theorem~\ref{DistribLemma} is exactly the tool needed to give the

\begin{proof}[Proof of Theorem~\ref{linefac}]
Fix $\varphi \in C_c^\infty(M)$. Note that $X\varphi, X^2 \varphi, \ldots$ have supports contained in the support of $\varphi$. We can choose a sequence of positive constants $c_k$ that decays rapidly enough that, for any sequence of scalars $a_k$ with $|a_k| < c_k$, the sum $\sum_k a_k X^k \varphi$ is uniformly convergent to a $C^\infty$ function (this requires a little diagonal selection trick, but is elementary). Obviously, the support of the sum is contained in that of $\varphi$. From Theorem~\ref{DistribLemma}, we can choose scalars $a_k$, $|a_k|<c_k$ and $f,g \in C_c^\infty(\R)$ with supports contained in $(-\epsilon, \epsilon)$ such that $g+\sum_k a_k f^{(k)} \to \delta$, in the distributional sense. Thus, defining $h_n = g+  \sum^n a_k f^{(k)}$, we have that $\pi(h_n) \varphi \to \varphi$ pointwise over $M$. On the other hand, 
\[ 
\pi(h_n) \varphi = \pi(g) \varphi + \sum^n a_k \pi(f^{(k)}) \varphi = \pi(g) \varphi + \pi(f) \sum^n a_k X^k \varphi.
\]
By the choice of constants $c_k$, the series $\sum^n a_k X^k \varphi$ converges uniformly to a function $\psi \in C_c^\infty(\R)$ which furthermore satisfies $\mathrm{supp}(\psi) \subset \mathrm{supp}(\varphi)$. Thus, we arrive at the weak factorization $\varphi = \pi(f) \psi + \pi(g) \varphi$,  which has all the features advertised in the statement.
\end{proof}

\section{Lie groupoid preliminaries}

To a large extent, the purpose of the present  section is to sufficiently fix our notations and conventions that  precise discussion of Lie groupoids is  possible later on.   Throughout,  $G \rightrightarrows B$ denotes a  Lie groupoid with source map $s$ and target map $t$. The inversion map is denoted $\imath : G \to G$ or, frequently, just $\gamma \mapsto \gamma^{-1}$. For peace of mind, it is assumed that  $s$ and $t$ are submersions with $k$-dimensional fibres and that the unit space $B$ is a closed submanifold of $G$. Multiplication is performed from right to left so that $\gamma_1 \gamma_2$ is defined if and only if $s(\gamma_1) = t(\gamma_2)$. We use the (standard) notations $G_x = s^{-1}(x)$ and $G^x  = t^{-1}(x)$ for the source and target fibres.  

\begin{exs} \text{ }
\begin{enumerate}
\item \emph{Transformation groupoid: }Let $H$ be a Lie group acting smoothly on the left of the manifold $B$. Set $G = H \times B$ and define $s,t  : G \to B$ by $s(h,b) = b$ and $t(h,b) = hb$. Define the product by $(h_1, h_2b)(h_2,b)=(h_1h_2,b)$. 
\item \emph{Pair groupoid: }Define $G = B \times B$ and take $s =\mathrm{pr}_1$, $t = \mathrm{pr}_2$, the two coordinate projections. Define the product by $(b_1,b_2)(b_2,b_3)=(b_1,b_3)$. 
\end{enumerate}
\end{exs}

With some superficial differences, our conventions accord with those found in \cite{Mackenzie} and \cite{Renault}. We follow \cite{Mackenzie} in viewing sections of the Lie algebroid of a Lie groupoid as right-invariant vector fields, but depart from \cite{Renault} by working with right Haar measures in place of left Haar measures. This is done so that our measures and our vector fields will both live along the same fibers (namely the source fibres), but is of no real importance; left and right and can always be exchanged using the inversion map.

 \subsection*{The Lie algebroid of a Lie groupoid}

 A vector field on $G$ is said to be  \emph{right-invariant} if it is tangent to the source fibres (i.e. is a section of the distribution $\ker(ds) \subset TG$)  and satisfies
\begin{align*} 
(R_\gamma)_* X = X && \gamma \in G,
\end{align*}
where $R_\gamma$ denotes right-multiplication by $\gamma$. The equation above  is a bit imprecise because $R_\gamma$ is not defined on all of $G$, but instead is a diffeomorphism $G_{t(\gamma)} \to G_{s(\gamma)}$. A more accurate formulation would be
\begin{align*}
 (R_\gamma)_* \left( \restr{X}{G_y} \right) = \restr{X}{G_x} && \gamma \in G_x \cap G^y,
 \end{align*}
where the restricted vector fields make sense because $X$ was assumed  tangent to the source fibres. Because we can write any $\gamma \in G$ as $R_\gamma( t(\gamma))$, it follows that a right-invariant vector field is completely determined by its restriction to $B$.

\begin{defn}[Definition 3.1, \cite{Mackenzie}]
As a vector bundle over the base manifold $B$, the  \emph{Lie algebroid} $AG$ of $G$ is the restriction of the source fibre tangent bundle $\ker(ds) \subset TG$ to $B$.
\end{defn}

Every section of $AG$ extends uniquely to a right-invariant vector field   on $G$.  Thus, right-invariant vector fields on $G$ are in 1-1 correspondence with sections of the Lie algebroid $AG$. This is the Lie groupoid counterpart of the analogous pair of descriptions for the Lie algebra of a Lie group. We shall tend to abuse notation, denoting a right-invariant vector and its restriction to $B$ (a section of $AG$) by the same symbol.

\begin{ex}
If $G$ is the pair groupoid $B \times B$, then the source fibres are just the   slices $B \times \{b\}$, $b \in B$. A right-invariant vector field on $G$ amounts to a single vector field $X$ on $B$ copied on each slice. Meanwhile, the Lie algebroid obviously identifies with $TB$, so sections of the Lie algebroid also correspond in an obvious way  to  vector fields on $B$. 
\end{ex}

There is a bundle map $AG \to TB$ called the \emph{anchor map} defined simply by restricting the differential $dt : TG \to TB$ of the target submersion to $AG$. It is common practice to denote the anchor map by  $\# : AG \to TB$, but we shall avoid this notation because it overlaps with  common notation for  fundamental vector fields in the context of a Lie group actions, and we will be considering groupoids acting on manifolds. Instead, given $X \in C^\infty(B,AG) = \{ \text{right-invariant vector fields on G}\}$, we write $X^B$ for the corresponding vector field on $B$. Thus, 
\begin{align}\label{X^B}
X^B(b) = \tfrac{d}{dt} t(e^{tX} b) \Big|_{t=0}.
\end{align}
The vector fields $X$ and $X^B$ are $t$-related; if $X$ is a complete, right-invariant vector field on $G$, then $X^B$ is a complete vector field on $B$ and the target submersion $t: G \to B$ is equivariant for the $\R$-actions on $G$ and $B$.

An irritation that does not arise in the Lie group context, but does for Lie groupoids, is the potential for  right-invariant vector fields to have incomplete flows. For example, in the case of the pair groupoid $G = B \times B$, the flow of a right-invariant vector field is just the flow of an arbitrary vector field on $B$, copied on each slice $B \times \{b\}$. The following proposition, however, at least shows that complete,  right-invariant vector fields are in plentiful supply.

\begin{propn}\label{compactcomplete}
Let $G \rightrightarrows B$ be a Lie groupoid, $X$   a compactly-supported section  of the Lie algebroid $AG$. Then,  $X$, considered as a right-invariant vector field\footnote{Possibly no longer compactly-supported; consider what happens if $G$ is a noncompact Lie group.} on $G$, is complete.
\end{propn}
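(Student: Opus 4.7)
The plan is to reduce completeness of the right-invariant field $X$ on $G$ to completeness of its image on $B$ under the anchor map, using right-invariance to propagate integral curves across source fibres.

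First, I would observe that the projected vector field $X^B$ on $B$ (defined as in \eqref{X^B}, i.e.\ $X^B = dt \circ X$ along $B$) has compact support contained in the support of $X$, hence is a compactly-supported vector field on the manifold $B$ and is therefore complete. This gives a globally defined flow $\beta_b(t)$ on $B$, where $\beta_b(0) = b$. Next, since integral curves of $X$ lie in source fibres and $X$ is right-invariant, given any $\gamma \in G$ I would check that if $\alpha_b$ denotes the maximal integral curve of $X$ starting at $b := t(\gamma) \in B$, then $t \mapsto \alpha_b(t)\cdot \gamma$ is an integral curve through $\gamma$: both sides agree at $t=0$, and the derivative of the right-translate equals $(R_\gamma)_\ast X(\alpha_b(t)) = X(\alpha_b(t)\gamma)$ by right-invariance, so uniqueness of ODE solutions finishes the identification. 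Consequently, it suffices to show the integral curves $\alpha_b$ starting at points of $B$ are defined for all time.

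Now I would run the standard ``extension past the endpoint'' argument, but using right-invariance to overcome the fact that the local existence theorem only supplies control near points of $B$. Suppose, for contradiction, that $\alpha_b$ has finite maximal forward interval $[0,T^+)$ with $T^+ < \infty$. Applying $t$ gives $t \circ \alpha_b = \beta_b$, which by completeness of $X^B$ is defined and continuous on all of $[0,T^+]$, so its image $K := \beta_b([0,T^+])$ is a compact subset of $B \subset G$. By a standard compactness argument applied to the smooth vector field $X$ on $G$ along the compact set $K$, there exists $\epsilon > 0$ such that the integral curve $\alpha_{b'}$ of $X$ on $G$ starting at any $b' \in K$ exists on $[0,\epsilon)$. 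Choose $t_0 \in [0,T^+)$ with $t_0 + \epsilon > T^+$, set $\gamma_0 := \alpha_b(t_0)$, and note $t(\gamma_0) = \beta_b(t_0) \in K$; then by the right-invariance identity from the previous paragraph,
\[ \alpha_b(t_0 + s) = \alpha_{\beta_b(t_0)}(s) \cdot \gamma_0 \qquad \text{for } s \in [0,\epsilon), \]
which extends $\alpha_b$ past $T^+$, contradicting maximality. An identical argument, or a reduction via the vector field $-X$, handles the backward direction, completing the proof.

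The main obstacle is conceptual rather than computational: right-invariant vector fields on $G$ are typically not themselves compactly supported (they extend all the way along each source fibre), so one cannot invoke the compactly-supported-implies-complete principle directly on $G$. The trick is that right-invariance reduces the problem to integral curves based at $B$, and along $B$ the projection to $X^B$ provides the compact ``trapping set'' $K$ needed to iterate local existence; this is where right-invariance does all the real work.
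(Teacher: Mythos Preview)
Your argument is correct and shares the essential mechanism with the paper's proof: both reduce completeness on $G$ to the behaviour of integral curves starting at points of $B$ via the right-invariance identity $\alpha_b(t)\cdot\gamma$, and both finish with a compactness argument on $B$. The implementations differ, however. The paper never invokes $X^B$ or a contradiction argument: it takes $K\subset B$ to be a compact set containing $\{b:X(b)\neq 0\}$, obtains a uniform $\epsilon>0$ with $(-\epsilon,\epsilon)\times K$ in the flow domain, observes that $X$ vanishes identically on $B\setminus K$ so that in fact $(-\epsilon,\epsilon)\times B$ lies in the flow domain, and then right-translates to get $(-\epsilon,\epsilon)\times G$ in the flow domain---which suffices for completeness. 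Your route instead passes through completeness of $X^B$ to produce, for each putative incomplete curve, a compact trapping set $K=\beta_b([0,T^+])$ and then runs an extension-past-the-endpoint argument. What the paper's version buys is brevity and a single global $\epsilon$; what your version buys is a template that would adapt to hypotheses weaker than compact support (anything guaranteeing completeness of $X^B$ and local uniform existence along its orbits would do).
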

\begin{proof}
Let $\phi : W \to G$ be the (maximal) flow of $X$. So,  $W$ is an open subset of $\R \times G$ containing $\{0\} \times G$. To see $\phi$ is complete, it suffices to show $(-\epsilon,\epsilon) \times G \subset W$ for some $\epsilon >0$. Let $K \subset B$ be a compact set containing $\{b \in B : X(b) \neq 0\}$. By an easy compactness argument, there exists an $\epsilon > 0$ such that $(- \epsilon, \epsilon) \times K \subset W$. In fact, since $X$ vanishes on $B -K$, we actually have $(-\epsilon,\epsilon) \times B \subset W$. But then, for any $\gamma \in G$, we have, using the right-invariance,  the integral curve $t \mapsto \phi_t( t(\gamma))\gamma  : (-\epsilon, \epsilon) \to G$ passing through $\gamma$ at $t=0$, and so $(-\epsilon,\epsilon) \times G \subset W$, as was to be proven.
\end{proof} 

Note that  right-invariance of a complete vector field $X$ on $G$ can also be formulated as a property of its flow; each diffeomorphism $e^{tX}$ should be right-invariant in the sense that it preserves the $s$-fibers and satisfies $e^{tX}(\gamma_1 \gamma_2) = (e^{tX}\gamma_1) \gamma_2$ whenever $\gamma_1 \gamma_2$ is defined.

\subsection*{Haar systems and convolution}

Defining a convolution product on $C_c^\infty(G)$ for a Lie groupoid $G \rightrightarrows B$ requires a choice of (smooth) Haar system. This choice is ultimately unimportant; the algebras associated to different Haar measures are canonically isomorphic. Indeed, by replacing functions on $G$ with sections of an appropriate density bundle,  it is possible to obtain the convolution algebra in a fully intrinsic way which does not require a choice of Haar system. See the discussion following Definition~2 in \cite{Connes},~Section~2.5. In this article, we will stick with functions, however, in large part to better resemble the classical Dixmier-Malliavan theorem.

For a given a submersion $p : W \to B$, say with $k$-dimensional fibres, a  (smooth) \emph{fibrewise measure} is a collection $\lambda = (\lambda_b)$ of smooth measures   on the fibres  $p^{-1}(b)$, $b \in B$ such that, in any open subset of $W$ small enough to be identified with  $\R^k \times U$ for  $U$ an open set in $B$ in such a way that   $p$ is identified with the factor projection $\R^k \times U \to U$, the measures take  the form $\lambda_b = \rho(\cdot, b)  dt_1 \cdots dt_k$, $b \in U$, where $\rho$ is a smooth, positive-valued function on $\R^k \times U$ and $dt_1 \cdots dt_k$ is the standard volume measure copied on each fibre $\{b\} \times \R^k$. Equivalently, one can think of $\lambda$ as a globally positive section of the density bundle of the $p$-vertical subbundle $\ker(dp) \subset TW$. We list a few basic properties:
\begin{enumerate}
\item Fibrewise measures are unique up to rescaling (corresponding to the fact that the density bundle is trivial and oriented); if $\lambda$ and $\nu$ are fibrewise measures for $p:W \to B$, then there is a unique positive-valued, smooth function $\rho$ on $W$ such that $\nu = \rho \lambda$.  
\item A fibrewise measure $\lambda$ for $p: W \to B$  determines a corresponding ``integration along fibres'' map $p_! :  C_c^\infty(W) \to C_c^\infty(B)$.
\item Fibrewise measures  can be pulled back. Suppose, $p:W \to B$ is a submersion and $\mu: M \to B$ is a smooth map so that $\mathrm{pr}_2 : W \times_{p,\mu} M \to M$ is a submersion.\footnote{The fibre product $W \times_{p,\mu} M = \{ (\gamma,m)  : p(\gamma)=\mu(m)\}$ is a closed submanifold of $W \times M$. This follows from writing it as the preimage of the diagonal $\Delta \subset M \times M$ under the map $p \times \mu : G \times M \to M \times M$ and noting that, because $p$ is a submersion, the latter map is transverse to $\Delta$.}
\[ \begin{tikzcd} 
W \times_{p,\mu} M \ar[d,"{\mathrm{pr}_2}"] \ar[r,"{\mathrm{pr}_1}"]& W \ar[d,"p"]\\
M \ar[r,"\mu"]& B 
\end{tikzcd} \]
Then, a fibrewise measure $\lambda$ for $p$ determines a fibrewise measure $\lambda_M$ for $\mathrm{pr}_2$ by  identifying $\mathrm{pr}_2^{-1}(m)$ with $p^{-1}(\mu(m))$ in the obvious way. If preferable, one may obtain this pullback construction in two stages, expressing it in terms of appropriate  product and restriction constructions.
\end{enumerate}

A \emph{(smooth, right) Haar system} $\lambda$ for a Lie groupoid $G \rightrightarrows B$ is a  fibrewise measure $\lambda = (\lambda_b)_{b \in B}$ for the source submersion $s : G \to B$ that is right-invariant in the sense that, for any $\gamma \in G$, the right-multiplication $R_\gamma$ is a measure isomorphism from  $(G_{t(\gamma)}, \lambda_{t(\gamma)})$ to $(G_{s(\gamma)},\lambda_{s(\gamma)})$.   

Recall the Haar measure of a Lie group can be constructed by ``bare hands'' by choosing a positive density on the tangent space of the identity and  then using translation operations to trivialize the tangent bundle and obtain a corresponding  globally-positive density on the whole group, which is translation-invariant by construction. The construction of a Haar system for a Lie groupoid $G \rightrightarrows B$ proceeds along analogous lines: for any $\gamma \in G$, the right-translation $R_\gamma : G_{t(\gamma)} \to G_{s(\gamma)}$ sends $t(\gamma) \mapsto  \gamma$. Thus, any  globally-positive section of the density bundle of $AG \to B$ can be canonically extended to a globally-positive section of the density bundle of $\ker(ds) \subset TG$, the subbundle of tangent spaces to the source fibres.  By construction, this extension is right-invariant in an obvious sense. Along the same lines, one sees that the (smooth) Haar measure of a Lie groupoid is unique up to multiplication (appropriately defined) by a smooth, positive-valued function on the base manifold.

Once a Haar measure $\lambda$ has been fixed for $G$, we obtain a convolution operation  $*$ with respect to which $C_c^\infty(G)$ becomes a (generally noncommutative) algebra.
\begin{align}\label{convdef}
(f*g)(\gamma_0) = \int_{G_{t(\gamma_0)}} f( \gamma^{-1}) g (\gamma \gamma_0) \ d\lambda_{t(\gamma_0)}
\end{align}

\subsection*{Lie groupoid actions}

Let $G \rightrightarrows B$ be a Lie groupoid. Let $M$ be a manifold with a given smooth map $\mu : M \to B$ called the \emph{momentum map}.   By definition, a  \emph{left action} of $G$ on $M$ is a smooth product $G\times_{s,\mu}M \ni (\gamma, m)  \mapsto \gamma \cdot m \in  M$, such that $\mu(\gamma \cdot m) = t(\gamma)$ for all $(\gamma, m) \in G \times_{s,\mu} M$, $\mu(m) \cdot m = m$ for all $m \in M$ and $(\gamma_1\gamma_2)\cdot m = \gamma_1 \cdot (\gamma_2 \cdot m)$ for all $\gamma_1,\gamma_2 \in G$, $m \in M$ with $s(\gamma_1) = t(\gamma_2)$, $s(\gamma_2) = \mu(m)$.

\begin{exs}
\text{ }
\begin{enumerate}
\item A Lie group $H$ acting smoothly on the left of a manifold $M$ can be considered as a Lie groupoid action by taking $G=H$, $B=\{1_G\}$ and $s,t,\mu$ to be the collapsing maps onto the one-point space $B$. 
\item Every Lie groupoid $G \rightrightarrows B$ acts on its own arrow space $G$  by taking the momentum map  to be the target submersion $t:G \to B$ and the action map to be the groupoid multiplication. 
\item Every Lie groupoid $G \rightrightarrows B$ acts on its own unit space $B$ by taking the momentum map to be the identity map on $B$. The fibre product $G \times_{s,\mathrm{id}} B$ canonically identifies with $G$ via, $\gamma \mapsto (\gamma, s(\gamma))$ and, under this identification, the action map is just the target submersion $t: G \to B$. 
\end{enumerate}
\end{exs}

Recall that, when a Lie group $H$ acts on a manifold $M$, each Lie algebra element $X \in \mathfrak{h}$ determines a corresponding \emph{fundamental vector field}  $X^\#$ on $M$. Similarly, when a Lie groupoid $G$ acts on a manifold $M$ with momentum map $\mu$, each Lie algebroid  section $X \in C_c^\infty(B,AG)$ determines a vector field $X^M$ on a $M$ by the formula:
\begin{align}\label{X^M} X^M(m) = \tfrac{d}{dt}  \Big[ e^{tX} \mu(m) \cdot m \Big]_{t=0}. \end{align}
The vector field $X^M$ is complete if $X$ is and satisfies the following right-invariance condition:  $e^{tX^M}( \gamma \cdot m)=(e^{tX} \gamma ) \cdot m$  whenever $\gamma \cdot m$ is defined.

If, additionally, a Haar system $\lambda$ has been specified for $G$, then an  action of $G$ on $M$ determines a representation $\pi$ of the convolution algebra $C_c^\infty(G) = C_c^\infty(G,\lambda)$ on $C_c^\infty(M)$ called the \emph{integrated form of the action} according to the following formula:
\begin{align}\label{wah}
(\pi(f) \psi )(m)  =\int_{G_{\mu(m)}} f(\gamma^{-1}) \psi( \gamma \cdot m) \ d\lambda_{\mu(m)}.
\end{align}
Take note that, in the special case when $G$ is acting on itself from the left, we recover \eqref{convdef}, the convolution product on $C_c^\infty(G)$.

The action of $G$ on $M$ can be packaged as a  Lie groupoid $G \ltimes M \rightrightarrows M$ called the \emph{transformation groupoid} of the action. This is done by taking $G \ltimes M = G \times_{s,\mu} M$ with structure maps defined as follows:
\begin{align*}
\text{source $\sigma$ :} && (\gamma,m) \mapsto m \\
\text{target $\tau$ :}&& (\gamma,m) \mapsto \gamma \cdot m \\
\text{inversion $\jmath$ :} && (\gamma,m) \mapsto (\gamma^{-1}, \gamma \cdot m) \\
\text{multiplication :}  && (\gamma_2,\gamma_1 \cdot m)( \gamma_1, m) = (\gamma_1\gamma_2,m)
\end{align*}
Note that the relation $\tau = \sigma \circ \jmath$ shows that the action map is in fact a submersion. The  Haar measure $\lambda$  on $G$ determines  a  corresponding Haar system $\lambda_M$ on $G \ltimes M$, using the obvious identification of each $(G \ltimes M)_m = G_{\mu(m)} \times \{m\}$ with $G_{\mu(m)}$.

\section{Relating $\R$ actions to groupoid actions}

This section is devoted to proving the somewhat technical Lemma~\ref{horrid} below.  Accordingly, most of the notations set down below can safely be forgotten once this end has been achieved. As always, $G \rightrightarrows B$ is a Lie groupoid with source $s$, target $t$ and inversion map $\imath$ and  Haar system $\lambda$. Let  $M$ be  a $G$-space with momentum map $\mu : M \to B$.  Let  $\pi$ be the corresponding representation of $C_c^\infty(G)$ on $C_c^\infty(M)$ defined by \eqref{wah}.
\[(\pi(f) \psi )(m)  =\int_{G_{\mu(m)}} f(\gamma^{-1}) \psi( \gamma \cdot m) \ d\lambda_{\mu(m)} \]
Let  $X_1,\ldots,X_k \in C_c^\infty(B,AG)$, thought of as complete, right-invariant vector fields on $G$. Correspondingly, we have complete vector fields $X_1^B,\ldots, X_k^B$ on $B$ and and  $X_1^M,\ldots,X_k^M$ on $M$ defined by \eqref{X^B} and \eqref{X^M}, respectively. Let $\pi_1,\ldots, \pi_k$ be the representations of $C_c^\infty(\R)$ on $C_c^\infty(M)$ associated to the complete vector fields $X_1^M,\ldots,X_k^M$ in accordance with \eqref{Rint}.
\begin{align*} (\pi_i(f) \psi )(m) = \int_\R f(-t) \psi(e^{t X_i^M} m) \ dt \end{align*} 
Our basic goal is to work out the relationship between $\pi$ and $\pi_1,\ldots,\pi_k$ in neighbourhoods of $G$ that are parametrized by the map  $u : \R^k \times B \to G$ defined by
\begin{align*}
u(t_1,\ldots,t_k,b) = e^{t_1X_1} \cdots e^{t_k X_k}b.
\end{align*}
We find it helpful to introduce the following operation, as an intermediary between $\pi$ and the $\pi_i$. Given $f \in C_c^\infty(\R^k\times B)$, we define a convolution operation $\widetilde \pi (f)$ on $C_c^\infty(M)$ by 
\begin{align*}
(\widetilde \pi(f)\psi )(m) = \idotsint_{\R^k}  f(-t_k,\cdots,-t_1, \mu( e^{t_1X_1^M} \cdots e^{t_kX_k^M}m)) \psi (e^{t_1X_1^M}\cdots e^{t_k X_k^M}m) \ dt_1 \cdots dt_k.
\end{align*}
The reason for using precisely the above expression will hopefully be made clear shortly. For now, let us note that the following relationship between $\pi_1,\ldots,\pi_k$ and $\widetilde \pi$ is a trivial  consequence of the definitions. 
\begin{lemma}\label{easierlemma}
Suppose $f_1,\ldots,f_k \in C_c^\infty(\R)$, $\chi \in C_c^\infty(B)$ and define $f \in C_c^\infty(\R^k \times B)$ by $f= f_k \otimes \cdots \otimes f_1 \otimes \chi $.  Then,   
\begin{align*}
\widetilde \pi (f) \psi = \pi_k(f_k) \cdots \pi_1(f_1) \psi
\end{align*}
holds whenever $\psi \in C_c^\infty(M)$ has $\chi \equiv 1$ on $\mu(\mathrm{supp}(\psi))$.
\end{lemma}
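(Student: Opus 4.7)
The plan is to simply unfold both sides of the claimed identity: the support condition on $\psi$ will let me discard the $\chi$-factor appearing in $\widetilde \pi(f)\psi$, after which an elementary Fubini calculation produces $\pi_k(f_k) \cdots \pi_1(f_1)\psi$ exactly.

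First, I would substitute $f = f_k \otimes \cdots \otimes f_1 \otimes \chi$ into the defining formula for $\widetilde \pi(f)\psi$ to get
\[ (\widetilde \pi(f) \psi)(m) = \idotsint_{\R^k} f_k(-t_k) \cdots f_1(-t_1)\, \chi\bigl(\mu(e^{t_1 X_1^M} \cdots e^{t_k X_k^M} m)\bigr) \psi(e^{t_1 X_1^M} \cdots e^{t_k X_k^M} m)\, dt_1 \cdots dt_k. \]
The crucial observation is that the integrand vanishes unless $e^{t_1 X_1^M} \cdots e^{t_k X_k^M} m \in \mathrm{supp}(\psi)$, and whenever that occurs, $\mu(e^{t_1 X_1^M} \cdots e^{t_k X_k^M} m) \in \mu(\mathrm{supp}(\psi))$, a set on which $\chi \equiv 1$ by hypothesis. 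Consequently the $\chi$-factor can be erased from the integrand without altering the value of the integral.

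With the $\chi$-factor gone, the integrand has compact support in $\R^k$, so Fubini applies and I may perform the integrations iteratively in the order $dt_1, dt_2, \ldots, dt_k$. Peeling off the innermost integral gives
\[ \int_\R f_1(-t_1) \psi(e^{t_1 X_1^M} e^{t_2 X_2^M} \cdots e^{t_k X_k^M} m)\, dt_1 = (\pi_1(f_1) \psi)(e^{t_2 X_2^M} \cdots e^{t_k X_k^M} m), \]
directly from the definition of $\pi_1$ in \eqref{Rint}. Repeating this step in order $t_2, \ldots, t_k$ peels off a factor of $\pi_i(f_i)$ at each stage and yields $(\pi_k(f_k) \cdots \pi_1(f_1)\psi)(m)$.

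There is essentially no real obstacle: the lemma is a bookkeeping statement designed precisely so that a tensor-factorized $f$ together with a cutoff $\chi$ on $B$ reconciles $\widetilde \pi$ with the iterated composition. The only care-point is to confirm that the ordering conventions inside $\widetilde \pi$ (namely, the first $\R^k$-argument of $f$ is paired with $-t_k$ and the outermost flow $e^{t_1 X_1^M}$) match the order in which the $\pi_i(f_i)$ are composed on the right; this is a direct check from the definitions, and no Haar-measure, right-invariance, or anchor-map considerations are needed at this stage.
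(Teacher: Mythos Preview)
Your proof is correct and is exactly the routine unfolding of definitions that the paper has in mind; indeed, the paper states the lemma as ``a trivial consequence of the definitions'' and offers no further argument. Your care in checking the ordering convention (that the first $\R^k$-argument of $f$ receives $-t_k$, so that $f_k\otimes\cdots\otimes f_1\otimes\chi$ evaluated at $(-t_k,\ldots,-t_1,b)$ gives $f_k(-t_k)\cdots f_1(-t_1)\chi(b)$) and in justifying why the $\chi$-factor may be dropped is precisely what makes the ``trivial'' verification honest.
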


Next, we relate $\widetilde \pi$ to $\pi$. 

\begin{lemma}\label{pi-pitilde}
Suppose that $W$ is an open subset of $\R^k \times B$ that is mapped diffeomorphically by $u$ onto an open subset of $G$.  Then, there exists a linear bijection $\theta_W$ from  $C_c^\infty(W) \subset C_c^\infty(\R^k \times B)$ to $C_c^\infty(u(W)) \subset C_c^\infty(G)$ such that  
\begin{align*}
\widetilde \pi( f) \psi = \pi( \theta_W(f)) \psi  
\end{align*}
holds for all  $f \in C_c^\infty(W)$ and $\psi \in C_c^\infty(M)$. 
\end{lemma}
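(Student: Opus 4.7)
The plan is to define $\theta_W$ via an explicit change-of-variables formula and then verify the identity $\widetilde\pi(f)\psi = \pi(\theta_W(f))\psi$ by unwinding both sides. My first step is to establish, by induction on $k$, the compatibility
\[
e^{t_1 X_1^M} \cdots e^{t_k X_k^M} m \;=\; u(t_1, \ldots, t_k, \mu(m)) \cdot m.
\]
The base case uses $\mu(m) \cdot m = m$ together with the right-invariance property $e^{tX^M}(\gamma \cdot m) = (e^{tX}\gamma) \cdot m$ recorded in Section~3, and the inductive step then reduces to associativity of the action. A direct consequence is that $\mu(e^{t_1 X_1^M} \cdots e^{t_k X_k^M} m) = t(u(t, \mu(m)))$, so the entire integrand defining $(\widetilde\pi(f)\psi)(m)$ depends on $(t_1,\ldots,t_k)$ only through the single groupoid element $\gamma := u(t, \mu(m)) \in G_{\mu(m)}$.

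The second step is a fibrewise change of variables. Because each right-invariant $X_i$ is tangent to the source fibres, $s(u(t,b)) = b$ identically, so for each fixed $b$ the slice $u(\cdot, b) : W_b \to G_b$ (with $W_b = \{t : (t,b) \in W\}$) is a diffeomorphism onto an open subset of the source fibre $G_b$. Pulling back the Haar measure $\lambda_b$ along this slice diffeomorphism produces a smooth positive density which I would write as $J(t,b)\,dt_1\cdots dt_k$, with $J$ jointly smooth in $(t,b)$ by smoothness of the Haar system. Substituting $\gamma = u(t,\mu(m))$ in the defining integral of $\widetilde\pi(f)\psi(m)$ converts it to an integral over $u(W) \cap G_{\mu(m)}$ against $d\lambda_{\mu(m)}$, with integrand $F(\gamma)\psi(\gamma \cdot m)$ where
\[
F(\gamma) \;=\; f\!\left(-\tau_k(\gamma), \ldots, -\tau_1(\gamma),\, t(\gamma)\right) \, J(\tau(\gamma), s(\gamma))^{-1},
\]
and $\tau = (\tau_1,\ldots,\tau_k) : u(W) \to \R^k$ is the $\R^k$-component of the smooth inverse $u^{-1}$.

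Comparing with the expression $\pi(g)\psi(m) = \int_{G_{\mu(m)}} g(\gamma^{-1}) \psi(\gamma \cdot m)\, d\lambda_{\mu(m)}(\gamma)$ then forces $\theta_W(f)(\gamma^{-1}) = F(\gamma)$ for $\gamma \in u(W)$ (extended by zero elsewhere), which is the desired defining formula. Bijectivity of $\theta_W$ is then immediate: it is built from the diffeomorphism $u^{-1}$, the sign-reversal-plus-reordering map on $\R^k$, multiplication by the smooth strictly positive function $J^{-1}$, and pre-composition with inversion, each of which is invertible on $C_c^\infty$. The hard part is the third step, namely keeping the inversion in $\pi$, the sign-reversals built into the definition of $\widetilde\pi$, and the Jacobian $J$ consistent so that the natural domain of the resulting function matches $u(W)$ under the conventions of the statement; once the inductive identity in the first step is in hand, everything else is a mechanical unwinding of definitions.
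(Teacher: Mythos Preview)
Your approach is the paper's, carried out concretely rather than through the transformation-groupoid packaging: both amount to ``push forward along $u$, divide by the pullback density of $\lambda$, and account for the inversion in $\pi$.'' Your Step~1 identity $e^{t_1X_1^M}\cdots e^{t_kX_k^M}m = u(t,\mu(m))\cdot m$ is correct and is exactly what the paper encodes in the intertwining relation $v\circ\widetilde\jmath = \jmath\circ v$.

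There is, however, a genuine domain slip in Step~2 --- precisely the issue you flag at the end but do not actually resolve. The integrand of $\widetilde\pi(f)\psi(m)$ evaluates $f$ at the point $(-t_k,\ldots,-t_1,\,t(u(t,\mu(m)))) = \widetilde\imath(t,\mu(m))$, where $\widetilde\imath(t,b)=(-t_k,\ldots,-t_1,\,e^{t_1X_1^B}\cdots e^{t_kX_k^B}b)$ is an order-two diffeomorphism of $\R^k\times B$. Thus the support of the integrand in $(t,\mu(m))$ lies in $W':=\widetilde\imath(W)$, \emph{not} in $W$. Your change of variables $\gamma=u(t,\mu(m))$ therefore requires $u|_{W'}$ to be a diffeomorphism; this does hold, since $u\circ\widetilde\imath=\imath\circ u$ gives $u(W')=\imath(u(W))$, but you have not said so, and your Jacobian $J$ and inverse $\tau$ are set up on $W$ and $u(W)$, where they are not the relevant objects. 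As written, your formula $\theta_W(f)(\gamma^{-1})=F(\gamma)$ for $\gamma\in u(W)$ yields a function supported on $u(W)^{-1}=u(W')$, contradicting the target $C_c^\infty(u(W))$. The fix is exactly what the paper does: define the density $\rho_{W'}$ (your $J$) as the pullback of $\lambda$ along $u|_{W'}$, and let the change of variables run over $u(W')$; then $\gamma^{-1}$ lands in $u(W)$ and $\theta_W(f)$ has the correct support. Once this is corrected your computation and the paper's agree.
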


We remark that the bijection $\theta_W$ above is independent of the manifold  $M$ and the given action of  $G$; the same $\theta_W$ works for all $G$-sets. The basic idea is to define $\theta_W(f)$  as the pushforward of $f$ by $u$, followed by multiplication by a suitable Jacobian factor.

Before proceeding to the proof of Lemma~\ref{pi-pitilde} we find it useful to re-express the representations $\pi$ and $\widetilde \pi$ in a somewhat more abstract form. Form the transformation groupoid $G \ltimes M \rightrightarrows M$ with source $\sigma$, target $\tau$, inversion map $\jmath$ and induced Haar system $\lambda_M$.  Given $f \in C_c^\infty(G)$ and $\psi \in C_c^\infty(M)$, we define $f \times \psi \in C_c^\infty(G \ltimes M)$ by restricting $f \otimes \psi$ to $G \ltimes M \subset G \times M$
\begin{align*}
(f \times \psi)(\gamma,m) = f(\gamma) \psi(m).
\end{align*}
We can express $\pi$ in terms of the above operations as follows:
\begin{align}\label{abspi}
\pi(f) \psi = \sigma_!( (f \times \psi) \circ \jmath) && f \in C_c^\infty(G), \psi \in C_c^\infty(M),
\end{align}
where $\sigma_! : C_c^\infty(G \ltimes M) \to C_c^\infty(M)$ is the integration along fibres map associated to $\lambda_M$. 

Next, we find an analogous expression for $\widetilde \pi$. First, define the following maps:
\begin{align*}
u &: \R^k \times B \to G && (t_1,\ldots, t_k,b) \mapsto e^{t_1X_1} \cdots e^{t_kX_k} b  \\
\widetilde s &: \R^k \times B \to B && (t_1,\ldots, t_k,b)  \mapsto b \\
\widetilde \imath &: \R^k \times B \to \R^k \times B && (t_1,\ldots, t_k, b) \mapsto (-t_k,\ldots,-t_1,e^{t_1X_1^B} \cdots e^{t_kX_k^B} b) \\
&&&  \\
v &: \R^k \times M \to G \ltimes M&& (t_1,\ldots, t_k,m) \mapsto (e^{t_1X_1} \cdots e^{t_kX_k} \mu(m),m) \\
\widetilde \sigma &: \R^k \times M \to M && (t_1,\ldots, t_k,m)  \mapsto m  \\
\widetilde \jmath &: \R^k \times M \to \R^k \times M && (t_1,\ldots, t_k, m) \mapsto (-t_k,\ldots,-t_1,e^{t_1X_1^M} \cdots e^{t_kX_k^M} m).
\end{align*}
We give $\widetilde s$ and $\widetilde \sigma$ the obvious fibrewise measures, copying the standard volume measure of $\R^k$ on each fibre, and denote the associated integration along fibre maps by $\widetilde s_!$ and $\widetilde \sigma_!$. Notice that $\widetilde \imath$ and $\widetilde \jmath$ are order-2 diffeomorphisms and that the following intertwining relations are satisfied.
\begin{align*}
u \circ \widetilde \imath = \imath \circ u && \widetilde  s = s \circ u && v  \circ \widetilde \jmath = \jmath \circ v && \widetilde \sigma = \sigma \circ v 
\end{align*}
Given $f \in C_c^\infty(\R^k \times B)$ and $\psi \in C_c^\infty(M)$, we define $f \times \psi \in C_c^\infty(\R^k \times M)$  by
\[ (f \times \psi) (t_1,\ldots,t_k,m) = f(t_1,\ldots,t_k,\mu(m)) \psi(m). \]
By analogy with \eqref{abspi}, we give the following expression for $\widetilde \pi$ in terms of the above operations.
\begin{align}\label{abspitild}
\widetilde \pi(f) \psi  = \widetilde \sigma_!( (f \times \psi) \circ \widetilde \jmath) && f \in C_c^\infty(\R^k \times B), \psi \in C_c^\infty(M)
\end{align}
With these preparations and notations, we proceed to the

\begin{proof}[Proof of Lemma~\ref{pi-pitilde}]
The relation  $u\widetilde \imath=\imath u$ implies that $W' \colonequals \widetilde \imath(W)$ is also mapped diffeomorphically by $u$ onto an open subset of $G$.  Thus, there exists a smooth, positive-valued function $\rho_{W'}$ on $W'$ such that the pullback of the Haar measure along $u$ to $W' \subset \R^k \times B$ equals $\rho_{W'} \ dt_1 \cdots dt_k$. Thus, for all $f \in C_c^\infty(u(W'))$, we have
\[ \widetilde s_!( (f \circ u|_{W'}) \rho_{W'}) = s_!(f). \]
Let $\Omega = ( \id \times \mu)^{-1}(W)$, an open subset of $\R^k \times M$, and note that $v$ maps $\Omega$ diffeomorphically onto an open subset of $G \ltimes M$. The relation $v \widetilde \jmath=\jmath v$ implies that  $\Omega' \colonequals \widetilde \jmath(\Omega)$ is also mapped diffeomorphically by $v$  onto an open subset of $G \ltimes M$.  The pullback of the Haar measure $\lambda_M$ of $G \rtimes M$ along $v|_{\Omega'}$ is  $\rho_{\Omega'} \ dt_1 \ldots dt_k$, where $\rho_{\Omega'} = \rho_{W'}(\id \times \mu)$. Thus, for all $f \in C_c^\infty(v(\Omega'))$, we have
\begin{align}\label{ohdear} 
\widetilde \sigma_!( (f \circ v|_{\Omega'}) \rho_{\Omega'}) = \sigma_!(f). 
\end{align}
Take $\theta_W$ to be the bijection $C_c^\infty(W) \to C_c(u(W))$  determined by 
\begin{align*} 
(\theta_W(f) \circ u|_W)( \rho_{W'} \circ \widetilde \imath ) = f   && f \in C_c^\infty(W).
\end{align*}
 Then, for any $f \in C_c^\infty(W)$ and $\psi \in C_c^\infty(M)$, a simple calculation shows that
\[  
f \times \psi 
= (( \theta_W(f) \circ u|_W)(\rho_{W'} \circ \widetilde \imath)) \times \psi 
= ((\theta_W(f) \times \psi) \circ v|_{\Omega})( \rho_{\Omega'} \circ \widetilde \jmath) 
\]
and so
\[ (f \times \psi) \circ  \widetilde \jmath =  (( \theta_W(f) \times \psi) \circ  \jmath  \circ v|_{\Omega'}) \rho_{\Omega'}. \]
Thus, applying \eqref{abspi}, \eqref{abspitild} and \eqref{ohdear}, we find that
\[ \widetilde \pi(f)\psi = \widetilde \sigma_!((f \times \psi) \circ  \widetilde \jmath) =  \sigma_!(( \theta_W(f) \times \psi) \circ \jmath ) = \pi(f) \psi, \]
as was to be proven.
\end{proof}

Combining Lemma~\ref{pi-pitilde} and Lemma~\ref{easierlemma}, we obtain the following technical lemma below, which has been our goal throughout the section.

\begin{lemma}\label{horrid}
Let $G \rightrightarrows B$ be a Lie groupoid with a given Haar system.  Let $K$ be a compact subset of $B$ and let $W$ be an open subset of $G$ containing $K$. Suppose $X_1,\ldots,X_k \in C_c^\infty(B,AG)$  constitute a base for $AG$ over each point of $K$. Then, there exists an $\epsilon > 0$ such that, for any $f_1,\ldots,f_k \in C_c^\infty(\R)$ having $\mathrm{supp}(f_i) \subset (- \epsilon,\epsilon)$, there exists an $f \in C_c^\infty(G)$ with $\mathrm{supp}(f) \subset W$ with the property that, for any $G$-space $M$ with momentum map $\mu$   and any $\psi \in C_c^\infty(M)$ with  $\mu(\mathrm{supp}(\psi)) \subset K$, one has
\[ \pi(f) \psi = \pi_1(f_1) \cdots \pi_k(f_k) \psi \]
where  $\pi_1,\ldots,\pi_k$ are the representations of $C_c^\infty(\R)$ on $C_c^\infty(M)$ associated by  \eqref{Rint} to the corresponding complete vector fields $X_1^M, \cdots, X_k^M$  on $M$ and $\pi$ denotes the representation of $C_c^\infty(G)$ on $C_c^\infty(M)$ given by \eqref{wah}.
\end{lemma}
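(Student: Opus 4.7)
The strategy is to derive the claim directly from the two main outputs of this section, Lemma~\ref{easierlemma} and Lemma~\ref{pi-pitilde}, after securing a suitable diffeomorphism property of $u$.

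The geometric preparation runs as follows. Since $s$ restricts to the identity on $B$, at any $b \in B$ the tangent space decomposes as $T_b G = T_b B \oplus AG|_b$, and under this splitting
\[
du_{(0,b)}(v_1,\ldots,v_k,w) = v_1 X_1(b) + \cdots + v_k X_k(b) + w.
\]
The hypothesis on the $X_i$ makes this an isomorphism for every $b \in K$. The inverse function theorem, combined with compactness of $K$, yields $\epsilon > 0$ and an open $V \supset K$ in $B$ such that $u$ is a local diffeomorphism on $(-\epsilon,\epsilon)^k \times V$. A standard sequential argument --- hypothetical distinct preimages of a common point would accumulate toward some $(0,b_0)$, contradicting the local diffeomorphism property there --- upgrades this to global injectivity on a possibly smaller rectangle of the same form. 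Since $u(0,b) = b \in W$ for $b \in K$, shrinking $\epsilon$ and $V$ once more ensures $u((-\epsilon,\epsilon)^k \times V) \subset W$; set $W_0 = (-\epsilon,\epsilon)^k \times V$.

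With this in place, fix $\chi \in C_c^\infty(B)$ with $\chi \equiv 1$ on $K$ and $\mathrm{supp}(\chi) \subset V$, and given $f_1,\ldots,f_k \in C_c^\infty(\R)$ supported in $(-\epsilon,\epsilon)$ form $g = f_k \otimes f_{k-1} \otimes \cdots \otimes f_1 \otimes \chi \in C_c^\infty(W_0)$ and set $f := \theta_{W_0}(g)$. By Lemma~\ref{pi-pitilde}, $f$ lies in $C_c^\infty(u(W_0)) \subset C_c^\infty(W)$ and satisfies $\pi(f)\psi = \widetilde \pi(g)\psi$; whenever $\mu(\mathrm{supp}(\psi)) \subset K$ the cutoff $\chi$ is identically $1$ on $\mu(\mathrm{supp}(\psi))$, so Lemma~\ref{easierlemma} gives $\widetilde \pi(g)\psi = \pi_k(f_k) \cdots \pi_1(f_1)\psi$. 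Combining delivers the desired identity up to the order of composition, which is reversed relative to the statement; since the hypothesis on $\{X_1,\ldots,X_k\}$ is symmetric under permutation, one simply relabels the $X_i$ in the opposite order at the outset to conclude.

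The principal obstacle is the geometric step --- verifying that $u$ restricts to a diffeomorphism on a neighborhood of $\{0\} \times K$ contained in $W$. Once that is arranged, the earlier lemmas assemble mechanically into the result.
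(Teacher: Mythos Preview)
Your argument is correct and follows essentially the same route as the paper: both proofs obtain $\epsilon$ and an open neighbourhood of $K$ on which $u$ is a diffeomorphism into $W$, introduce the same cutoff $\chi$, form the same tensor $f_k \otimes \cdots \otimes f_1 \otimes \chi$, and then chain Lemma~\ref{easierlemma} with Lemma~\ref{pi-pitilde}. You supply more detail in the geometric step (the paper compresses this to ``an inverse function theorem/compactness argument''), and you explicitly flag the order reversal $\pi_k(f_k)\cdots\pi_1(f_1)$ versus the stated $\pi_1(f_1)\cdots\pi_k(f_k)$ and fix it by relabelling the $X_i$; the paper's write-up glosses over this same discrepancy, so your treatment is in fact tidier on that point.
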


\begin{proof}
Let $u : \R^k \times M \to G$ be defined by $u(t_1,\ldots,t_k,b) = e^{t_1X_1} \ldots e^{t_kX_k}b$. By an inverse function theorem/compactness argument, there exists an open subset $U$ of $B$ containing $K$ and an $\epsilon > 0$ such that $u$ maps $(-\epsilon,\epsilon)^k \times U$ diffeomorphically onto an open subset of $G$. By possibly shrinking $\epsilon$ and $U$, we may assume that $u( (-\epsilon, \epsilon)^k \times U) \subset W$.  Let $\chi  \in C_c^\infty(B)$ satisfy  $\mathrm{supp}(\chi) \subset U$ and $\chi \equiv 1$ on $K$. Let $f_1, \ldots, f_k \in C_c^\infty(R)$ have $\mathrm{supp}(f_i) \subset (-\epsilon, \epsilon)$ so that $f = f_k \otimes \cdots \otimes f_1 \otimes \chi \in C_c^\infty(\R^k \times B)$ has $\mathrm{supp}(f) \subset (-\epsilon,\epsilon)^k \times U$. Then, for any $\psi \in C_c^\infty(M)$ with $\mathrm{supp}(\psi) \subset K$, we have
\[ \pi_k(f_k) \cdots \pi_1(f_1) \psi = \widetilde \pi( f) \psi = \pi( \theta_W(f)) \psi \]
where the first equality comes from Lemma~\ref{easierlemma} and the second from Lemma~\ref{pi-pitilde}. Since $\theta_W(f)$ is supported in $u( (-\epsilon,\epsilon)^k \times U) \subset W$, we are finished.
\end{proof}

\section{Proof of main theorem}

We have made the necessary preparations to prove Theorem~\ref{main}, stated in the introduction. In fact, we can prove the following more general result.

\begin{thm}
Let $G \rightrightarrows B$ be a Lie groupoid with a given   Haar system, let $M$ be a $G$-space with momentum map $\mu$ and let $\pi$ be the corresponding representation of  $C_c^\infty(G) = C_c^\infty(G,\lambda)$ on $C_c^\infty(M)$, i.e. the integrated form of the action defined by \eqref{wah}.   Then, for every $\varphi \in C_c^\infty(M)$, there exist $f_1,\ldots,f_N \in C_c^\infty(G)$ and $\psi_1, \ldots, \psi_N \in C_c^\infty(M)$ such that 
\[ \varphi = \pi(f_1)\psi_1 + \ldots + \pi(f_N) \psi_N. \] 
Moreover, this factorization can be taken such that, for all $i$, $\mathrm{supp}(\psi_i) \subset \mathrm{supp}(\varphi)$ and   $\mathrm{supp}(f_i) \subset W$, where $W$ is a prescribed open subset of $G$ containing $\mu( \mathrm{supp}(\varphi))$.
\end{thm}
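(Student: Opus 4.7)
The plan is to localize via a partition of unity on $B$ and, on each piece, convert iterated compositions of the $\R$-actions coming from a local frame of $AG$ (produced by Theorem~\ref{linefac}) into a single application of $\pi(f)$ via Lemma~\ref{horrid}. First I would cover the compact set $\mu(\mathrm{supp}(\varphi))$ by finitely many open sets $V_\alpha \subset W \cap B$ over each of which $AG$ is trivializable; shrinking as needed, I choose compact sets $K_\alpha \subset V_\alpha$ whose interiors still cover $\mu(\mathrm{supp}(\varphi))$ and pick sections $X_1^\alpha,\ldots,X_k^\alpha \in C_c^\infty(B,AG)$ (obtained by multiplying a local frame of $AG$ by a bump function) that form a basis for $AG$ at each point of $K_\alpha$. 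Letting $\{\chi_\alpha\}$ be a partition of unity on $\mu(\mathrm{supp}(\varphi))$ subordinate to $\{\mathrm{int}(K_\alpha)\}$, the decomposition $\varphi = \sum_\alpha \varphi_\alpha$ with $\varphi_\alpha = (\chi_\alpha \circ \mu)\varphi$ reduces the problem to each $\varphi_\alpha$ separately, since $\mathrm{supp}(\varphi_\alpha) \subset \mathrm{supp}(\varphi)$ and $\mu(\mathrm{supp}(\varphi_\alpha)) \subset K_\alpha$.

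Fix $\alpha$ and let $\pi_1^\alpha,\ldots,\pi_k^\alpha$ denote the representations of $C_c^\infty(\R)$ on $C_c^\infty(M)$ associated by formula \eqref{Rint} to the complete vector fields on $M$ induced by $X_1^\alpha,\ldots,X_k^\alpha$. First apply Lemma~\ref{horrid} to the data $(K_\alpha, W, X_1^\alpha, \ldots, X_k^\alpha)$ to extract an $\epsilon_\alpha > 0$. Then iterate Theorem~\ref{linefac}: factor $\varphi_\alpha = \pi_1^\alpha(f_1^{0})\eta^{0} + \pi_1^\alpha(f_1^{1})\eta^{1}$ with $\mathrm{supp}(f_1^{j}) \subset (-\epsilon_\alpha,\epsilon_\alpha)$ and $\mathrm{supp}(\eta^{j}) \subset \mathrm{supp}(\varphi_\alpha)$; next factor each $\eta^{j}$ using $\pi_2^\alpha$ with the same $\epsilon_\alpha$; and so on through $\pi_k^\alpha$. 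After $k$ rounds, $\varphi_\alpha$ is expressed as a sum of $2^k$ terms of the form $\pi_1^\alpha(f_1^{\vec j}) \cdots \pi_k^\alpha(f_k^{\vec j}) \eta^{\vec j}$, with all real-line factors supported in $(-\epsilon_\alpha,\epsilon_\alpha)$ and $\mathrm{supp}(\eta^{\vec j}) \subset \mathrm{supp}(\varphi)$. By the conclusion of Lemma~\ref{horrid}, each such composition equals $\pi(f^{\vec j}) \eta^{\vec j}$ for some $f^{\vec j} \in C_c^\infty(G)$ with $\mathrm{supp}(f^{\vec j}) \subset W$. Summing over $\alpha$ and $\vec j$ yields the advertised factorization.

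The main obstacle I foresee is bookkeeping of the support conditions through the iteration. In particular one must fix $\epsilon_\alpha$ via Lemma~\ref{horrid} \emph{before} invoking Theorem~\ref{linefac}, since Theorem~\ref{linefac} only delivers factorizations with real-line factors supported in a pre-specified interval. One must also verify that the $X_i^\alpha$, compactly-supported as sections of $AG$, induce complete flows on $M$; this follows from Proposition~\ref{compactcomplete} together with the discussion in Section~3, which shows that $X^M$ is complete whenever $X$ is complete on $G$. Beyond these two points, the argument is a routine chase through the definitions.
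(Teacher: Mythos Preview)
Your argument is correct and follows essentially the same approach as the paper: reduce via a partition of unity on $B$ to the case where $AG$ has a compactly-supported frame over the relevant compact set, fix $\epsilon$ via Lemma~\ref{horrid}, iterate Theorem~\ref{linefac} across the $k$ vector fields to obtain $2^k$ terms of the form $\pi_1(f_1)\cdots\pi_k(f_k)\psi$, and then invoke Lemma~\ref{horrid} to convert each into $\pi(f)\psi$ with $\mathrm{supp}(f)\subset W$. Your bookkeeping remarks (fixing $\epsilon_\alpha$ first, completeness of the induced flows on $M$) are apt and match the paper's implicit use of these facts.
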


\begin{proof}
Let $K = \mu(\mathrm{supp}(\varphi))$ and fix an open set $W$ in $G$ containing $K$. It is enough to prove the theorem under the additional hypothesis that the Lie algebroid $AG$ is trivial (as a bundle) over a neighbourhood of $K$. Indeed, we can use a partition of unity on $B$ to write any $\varphi$ as a finite sum of functions that satisfy this extra hypothesis and have support contained in that of the original $\varphi$. If each summand can be decomposed in the desired way, then so can the sum, simply by adding up the decompositions.

Under this extra assumption, there exist sections $X_1,\ldots, X_k \in C_c^\infty(B,AG)$ that constitute a frame of $AG$ over each point in $K$. Let $X_1^M,\ldots, X_k^M$ denote the corresponding complete vector fields on $M$ and let $\pi_1,\ldots, \pi_k$ denote the corresponding representations of $C_c^\infty(\R)$. Let $\epsilon > 0$ come from Lemma~\ref{horrid}.

Applying Theorem~\ref{linefac} with $X=X_k^M$ and $\psi = \varphi$, we can write
\[ \varphi = \pi_k(f_0)\psi_0 + \pi_k(f_1) \psi_1 \]
where $f_0,f_1 \in C_c^\infty(\R)$ with supports contained in $(-\epsilon,\epsilon)$ and $\psi_0, \psi_1 \in C_c^\infty(M)$ with supports contained in $\mathrm{supp}(\varphi)$. Applying Theorem~\ref{linefac} with $X=X_{k-1}^M$ for $\psi =\psi_0$ and $\psi = \psi_1$ then gives
\[ \varphi = \pi_1(f_0) \pi_2(f_{00}) \psi_{00}  + \pi_1(f_0) \pi_2(f_{01}) \psi_{01} + \pi_1(f_1) \pi_2(f_{10}) \psi_{10} + \pi_1(f_1) \pi_2(f_{11}) \psi_{11} \]
where the $f$s are in $C_c^\infty(\R)$ with supports contained in $(-\epsilon, \epsilon)$ and $\psi$s are in $C_c^\infty(M)$ with supports  contained in $\mathrm{supp}(\varphi)$. Continuing in this manner, we eventually get $\varphi$ as the sum of $2^k$ terms of the form 
\[ \pi_1(f_1) \cdots \pi_k(f_k) \psi \]
where $f_1, \ldots, f_k \in C_c^\infty(\R)$ have supports contained in $(-\epsilon, \epsilon)$ and $\psi \in C_c^\infty(M)$ has support contained in $\mathrm{supp}(\varphi)$.  If $\epsilon$ is sufficiently small, then Lemma~\ref{horrid} guarantees that each of these terms can be written as $\pi(f)\psi$ where $f \in C_c^\infty(G)$ has $\mathrm{supp}(f) \subset W$.  
\end{proof}

Specializing to the case where $G$ is acting on itself, we obtain the desired generalized Dixmier-Malliavin theorem as a corollary.

\begin{thm}\label{mainprecis}
Let $G \rightrightarrows B$ be a Lie groupoid with a given Haar system. Then, for any $\varphi \in C_c^\infty(G)$, there exist $f_1,\ldots, f_N, \psi_1,\ldots, \psi_N \in C_c^\infty(G)$ such that 
\[ \varphi = f_1 * \psi_1 + \ldots + f_N * \psi_N. \]
Moreover, this factorization can be taken such that, for all $i$,  $\mathrm{supp}(\psi_i) \subset \mathrm{supp}(\varphi)$ and  $\mathrm{supp}(f_i) \subset W$, where $W$ is a prescribed open subset of $G$ containing $t(\mathrm{supp}(\varphi))$.
\end{thm}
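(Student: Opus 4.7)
The plan is to deduce Theorem~\ref{mainprecis} directly as a corollary of the preceding theorem, instantiated at the canonical left action of $G$ on itself. Recall, as listed in the examples of groupoid actions in Section~3, that $G$ is a $G$-space with momentum map the target submersion $t : G \to B$ and action map the groupoid multiplication. The key observation, already noted in the discussion immediately following the definition \eqref{wah}, is that with these choices the integrated form $\pi$ of the action coincides with left convolution: comparing \eqref{wah} term-by-term with \eqref{convdef} gives $\pi(f)\psi = f * \psi$ for all $f, \psi \in C_c^\infty(G)$.

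Granted this identification, I would simply invoke the preceding theorem with $M = G$ and $\mu = t$. A prescribed open set $W \subset G$ containing $t(\mathrm{supp}(\varphi))$ is precisely an open set containing $\mu(\mathrm{supp}(\varphi))$, so the hypothesis transfers verbatim. The conclusion then produces a decomposition $\varphi = \sum_{i=1}^N \pi(f_i)\psi_i = \sum_{i=1}^N f_i * \psi_i$ with $\mathrm{supp}(\psi_i) \subset \mathrm{supp}(\varphi)$ and $\mathrm{supp}(f_i) \subset W$, which is exactly the statement. There is no substantive obstacle to overcome: the entire technical burden—the reduction to one-parameter subgroups via Lemma~\ref{horrid}, the $\R$-action factorization of Theorem~\ref{linefac}, and the iteration over a frame of $AG$ near $K = \mu(\mathrm{supp}(\varphi))$—has already been absorbed into the proof of the more general theorem, and this corollary amounts to nothing more than unpacking the definitions.
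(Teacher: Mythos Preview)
Your proposal is correct and matches the paper's own argument exactly: the paper simply states that Theorem~\ref{mainprecis} follows by specializing the preceding theorem to the case where $G$ acts on itself, with $\pi(f)\psi = f*\psi$ and $\mu = t$. There is nothing to add.
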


\section{Product structure of ideals of smooth functions under multiplication}

In this section, we study ideals of  smooth functions  vanishing to given order along a submanifold when the operation is  pointwise multiplication. This will lay the groundwork for the subsequent section in which  the operation is convolution. The result we wish to generalize to the convolution setting is the following.

\begin{thm}\label{I2=I}
Let $X$ be a closed submanifold of a smooth manifold $M$. Let $I_p \subset C_c^\infty(M)$ denote the ideal of functions vanishing to $p$th order on $X$. Then, 
\begin{enumerate}
\item $(I_\infty)^2   = I_\infty$
\item $(I_1)^p = I_p$ for every positive integer $p$. 
\end{enumerate}
\end{thm}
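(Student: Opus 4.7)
The inclusions $(I_\infty)^2 \subset I_\infty$ and $(I_1)^p \subset I_p$ are immediate from the Leibniz rule, so the content lies in the reverse inclusions.

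For (2), the plan is to localize with a partition of unity and then apply Hadamard's lemma iteratively. Choose a finite partition of unity $(\chi_i)$ on a neighbourhood of $\mathrm{supp}(\varphi)$ such that each $\chi_i$ is supported either in a slice chart of $X$ or in an open set disjoint from $X$. For summands $\chi_i \varphi$ of the second type, let $\tau_i \in C_c^\infty(M)$ be a bump vanishing on $X$ (hence in $I_1$) and equal to $1$ on $\mathrm{supp}(\chi_i\varphi)$; then $\chi_i \varphi = \tau_i^{p-1} \cdot (\chi_i\varphi)$ visibly lies in $(I_1)^p$. For summands supported in a slice chart with coordinates $(x_1,\ldots,x_k,y)$ where $X = \{x=0\}$, iterated Hadamard writes $\chi_i \varphi = \sum_{|\alpha|=p} x^\alpha h_{i,\alpha}$ with each $h_{i,\alpha}$ smooth and compactly supported inside the chart. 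Multiplying by a cutoff $\rho$ that equals $1$ on $\mathrm{supp}(h_{i,\alpha})$ and has compact support in the chart promotes each local coordinate $x_j$ to a global element $\rho x_j \in I_1$; each term then becomes a product of exactly $p$ elements of $I_1$ by absorbing $h_{i,\alpha}$ into one of the $p$ factors.

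For (1), the same partition-of-unity reduction applies; summands supported away from $X$ are handled as in (2), since if $\chi_i\varphi$ has support disjoint from $X$ and $\tau_i \in I_\infty$ equals $1$ on that support, then $\chi_i\varphi = \tau_i \cdot (\chi_i\varphi) \in (I_\infty)^2$. The real content is thus the local statement: any $\varphi \in C_c^\infty(\R^n)$ flat on $\{x_1=\cdots=x_k=0\}$ decomposes as a finite sum of products of two compactly supported smooth functions flat on the same subspace. The plan is to perform a radial substitution in the normal direction (for instance $x = e^{-t}\omega$ with $\omega$ on the unit sphere, treating the two sides separately in the one-dimensional case), which converts ``flat at the origin in the normal direction'' into ``Schwartz as $t \to +\infty$.'' The problem thereby reduces to an analogous factorization statement for Schwartz functions, to be supplied by the inversion principle Lemma~\ref{corresp} that the introduction announces as the technical heart of this section.

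The main obstacle is the flat case of (1). Because the obstruction to factoring a flat function is not captured by any finite jet, algebraic manipulation in local coordinates cannot settle it, and one genuinely needs the analytic machinery of Lemma~\ref{corresp} to convert infinite-order vanishing into a product of two flat functions, in the same spirit as the Dixmier--Malliavin distribution lemma (Theorem~\ref{DistribLemma}) was used to prove Theorem~\ref{linefac}.
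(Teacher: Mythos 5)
Your part (2) is correct and is essentially the paper's argument: the paper localizes exactly as you do and uses the iterated Hadamard decomposition (its Lemma~\ref{taylor}), promoting coordinates to global elements of $I_1$ with cutoffs. (For the record, the paper's official justification of the theorem as stated is a citation to Tougeron for (1) and ``Taylor's theorem locally'' for (2); the self-contained argument it actually writes out is the more general Theorem~\ref{technicaltool}, which specializes to this statement by taking $\pi=\mathrm{id}$.)

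For part (1) there is a genuine gap. You correctly reduce to the local flat case and correctly identify that inversion converts ``flat at $0$'' into ``Schwartz at $\infty$,'' but you then defer to ``an analogous factorization statement for Schwartz functions, to be supplied by Lemma~\ref{corresp}.'' Lemma~\ref{corresp} is only the change of variables $t\mapsto 1/t$; it contains no factorization statement, and you never say what the Schwartz-side statement is or why it holds. The missing idea is the \emph{Schwartz envelope} (the paper's Lemma~\ref{envelope}, due essentially to Chevalley): given a \emph{countable family} of rapidly decaying functions, there is a single positive Schwartz function $g$ dominating all of them so strongly that each quotient tends to $0$. Transported by inversion (Lemma~\ref{decay0}) and made radial (Lemma~\ref{radialfn}), this produces one positive function $\rho(x)=\varphi(|x|)$, flat at $0$, with $f=\rho\cdot(f/\rho)$ --- a product of exactly \emph{two} flat functions, no sum needed. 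Two further points that your sketch does not address are essential: (i) to make $f/\rho$ smooth and flat one must control \emph{all} partial derivatives of the quotient, and differentiating $f/\rho$ produces denominators $\rho^m$ of arbitrarily high order, so the envelope must be chosen to dominate the whole countable family $|\partial^\gamma f|^{1/m}$ (all $\gamma$, all $m$) --- this is the role of the paper's Lemma~\ref{fonglemma}, whose proof shows the relevant class of quotients is closed under differentiation; (ii) your proposed substitution $x=e^{-t}\omega$ in polar coordinates creates a smoothness problem at the origin that the paper avoids by never changing variables in $\R^k$ at all, instead checking directly that $\varphi(|x|)$ is smooth and flat. Without the envelope and the derivative bookkeeping, the sketch does not yet yield a proof of $(I_\infty)^2=I_\infty$.
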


The relation $(I_\infty)^2 = I_\infty$ actually  remains true even when  $X$ is any closed subset of $M$, and not necessarily a submanifold. This stronger result is due to Tougeron. See \cite{Tougeron}, Proposition~V.2.3 as well as  \cite{Thilliez}, Section~4. Note that, although the results in these references are stated in terms of germs of functions, it is a simple matter to use partitions of unity to convert them into statements about compactly-supported functions. The second relation $(I_1)^p = I_p$ is much more elementary than the first and can be established by applying Taylor's theorem locally. 

Theorem~\ref{I2=I} is not quite sufficient for our purposes, however. We need to consider a submersion  $\pi : N \to M$ (later taken to be the source or target projection of Lie groupoid) and the resulting $C_c^\infty(N)$-module structure on $C_c^\infty(M)$.  It will be convenient for us to combine the cases of infinite and finite vanishing order into a single statement, but we hasten to point out that the case of infinite vanishing order is by far the more substantive one.  Note also that the MathOverflow question \cite{MO} (still not fully resolved at  time of writing) centers around quite  similar issues. 

%\begin{thm}
%Let $X$ be a closed submanifold of $M$.  Write $I_p \subset C_c^\infty(M)$ for the  ideal of functions that vanish to $p$th order on $X$. Then, the relation
%\[ I_{p+q} =  I_p I_q  \]
%holds for all $p,q \in \N \cup \{ \infty\}$, where the product of two ideals has the usual meaning. 
%\end{thm}
%\begin{proof}
%Apply Theorem~\ref{technicaltool} from the last section  with  $M=N$ and $\pi = \mathrm{id}_M$. 
%\end{proof}

%Here we  gather various technicalities concerning the factorization  of smooth functions which are needed to establish  Proposition~\ref{I2=I} and Proposition~\ref{pullout} in Section~\ref{application}. Each of the aforementioned results is  subsumed by Theorem~\ref{technicaltool} below. 
%We make no claims about the originality of  the material presented here. 
%Nonetheless, it seems worthwhile to gather all of the relevant ideas in one place.

\begin{thm}\label{technicaltool}
Let $\pi : N \to M$ be a submersion. View $C_c^\infty(N)$ as a $C_c^\infty(M)$-module with product $f \cdot g = (f \circ \pi)g$, where $f \in C_c^\infty(M)$, $g \in C_c^\infty(N)$.  Let $X$ be a closed submanifold of $M$ and set $Y \colonequals \pi^{-1}(Y)$. For $p \in \N \cup \{ \infty\}$, write $I_p \subset C_c^\infty(M)$ and $J_p \subset C_c^\infty(M)$ for the ideals of functions that vanish to $p$th order on $X$ and $Y$ respectively. Then, the relation 
\[  J_{p + q} =  I_p \cdot J_q \] 
is satisfied for all $p,q \in \N \cup \{\infty\}$, where  $I_p \cdot J_q$ means the set of all sums of products $g \cdot h$, where $g\in I_p$, $h \in J_q$. 
\end{thm}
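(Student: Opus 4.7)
The easy inclusion $I_p \cdot J_q \subset J_{p+q}$ follows from two observations. First, the pullback $\pi^*$ sends $I_p$ into $J_p$: in a submersion chart, $\pi$ is a linear projection, so the order of vanishing along $X$ transfers exactly to the order of vanishing along $Y = \pi^{-1}(X)$. Second, the Leibniz rule gives $J_p \cdot J_q \subset J_{p+q}$, and hence $I_p \cdot J_q = (\pi^*I_p)\cdot J_q \subset J_p \cdot J_q \subset J_{p+q}$.

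For the reverse inclusion, I would localize with a partition of unity. Given $h \in J_{p+q}$, cover $\mathrm{supp}(h)$ by finitely many open sets of two types: (i) open sets $U$ with $\overline{U}\cap Y = \emptyset$, or (ii) open sets $U$ sitting in an adapted coordinate chart with coordinates $(x,y,z)$ on $N$ in which $\pi$ is projection onto $(x,y)$ and $X$ is locally cut out by $\{x=0\}$. In type (i), $\pi(\overline{U})$ avoids $X$, so a cutoff $\chi \in C^\infty_c(M)$ equal to $1$ on $\pi(\mathrm{supp}(h|_U))$ and supported away from $X$ lies in $I_\infty$, and gives $h|_U = (\chi\circ\pi) \cdot h|_U \in I_\infty \cdot J_\infty \subset I_p \cdot J_q$. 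In type (ii) with $p+q$ finite, Taylor's theorem in the $x$-variable yields $h|_U(x,y,z) = \sum_{|\sigma|=p+q} x^\sigma R_\sigma(x,y,z)$ with smooth remainders $R_\sigma$; splitting each $\sigma = \beta + \gamma$ with $|\beta|=p$, $|\gamma|=q$ and inserting suitable cutoffs $\tilde\chi$ on $M$ and $\tilde\phi$ on $N$, one displays $h|_U$ as $\sum ((\tilde\chi x^\beta)\circ\pi)\cdot(\tilde\phi x^\gamma R_\sigma) \in I_p \cdot J_q$.

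The main obstacle is the infinite case $p+q=\infty$ in type (ii). Since $I_p \supset I_\infty$ and $J_q \supset J_\infty$ whenever $p+q=\infty$, this reduces to proving $J_\infty \subset I_\infty \cdot J_\infty$, which amounts to a parameterized variant of Tougeron's theorem: any compactly supported $h(x,y,z)$ flat along $\{x=0\}$ admits a \emph{finite} decomposition $h = \sum g_i(x,y)\, h_i(x,y,z)$ with each $g_i$ flat along $\{x=0\}$ (as a function of $(x,y)$) and each $h_i$ flat along $\{x=0\}$ (as a function of $(x,y,z)$). Taylor expansion is inadequate here because the remainder in any finite Taylor expansion is merely smooth, not flat, so one only obtains membership in $I_k \cdot J_q$ for each finite $k$ rather than in $I_\infty \cdot J_q$. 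I would derive the required parameterized statement from the inversion principle of Lemma~\ref{corresp}---which connects Schwartz functions to compactly supported flat functions---to construct dividing factors $g_i$ with precisely controlled rates of vanishing at $\{x=0\}$, mirroring the role that Theorem~\ref{DistribLemma} plays in the convolutional setting earlier in the paper. Globalizing the local statement via the partition of unity then completes the proof.
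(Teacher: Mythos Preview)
Your plan is correct and follows the same outline as the paper's proof: reduce via a partition of unity to adapted submersion charts, treat the finite-order case by Taylor expansion in the transverse variable $x$, and handle $p+q=\infty$ by reducing to $J_\infty\subset I_\infty\cdot J_\infty$ and invoking the Schwartz/inversion machinery around Lemma~\ref{corresp}. Two refinements in the paper are worth noting. For finite $p$, Lemma~\ref{taylor} expands only to order $p$, writing $f=\sum_{|\alpha|=p}x^\alpha f_\alpha$ with each $f_\alpha$ already vanishing to order $q$; your scheme of expanding to order $p+q$ and splitting the monomial $x^\sigma=x^\beta x^\gamma$ also works but is slightly less direct. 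More significantly, in the flat case Lemma~\ref{inftaylor} shows that a \emph{single} dividing factor $\rho(x)$, depending only on the transverse coordinates and not on the tangential base coordinates $y$, already suffices: $f(x,y,z)=\rho(x)\,h(x,y,z)$ with $\rho$ flat at $0$, positive off $0$, and $h$ flat along $\{x=0\}$. So no finite sum $\sum g_i(x,y)\,h_i$ is needed, and $\rho$ times a base cutoff lands in $I_\infty$ automatically. The construction of $\rho$ combines the Schwartz envelope (Lemma~\ref{envelope}, equivalently Lemma~\ref{corresp} after inversion) applied to the family $\sup_{|x|\le t,\,|y|\le r}|\partial^\gamma f|^{1/m}$, with a radial symmetrization step (Lemma~\ref{radialfn}) and a quotient-smoothness criterion (Lemma~\ref{fonglemma}).
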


The problem is obviously local in nature; one can use a partition of unity to chop up a function $f$ on $N$ into smaller functions all of which vanish to the same order as $f$ on $Y$. In fact, it is enough to consider the case where $N = \R^k \times \R^\ell$, $M = \R^k$, $X = \{0\}$ and $\pi$ is the standard projection, so that $Y = \{0 \} \times \R^\ell$.  Throughout this section, $n = k+ \ell$ and  $\R^n = \R^k \times \R^\ell$ has coordinates $(x,y) = (x_1,\ldots,x_k, y_1,\ldots,y_\ell)$. We use the usual multi-index notation for partial derivatives: given $\gamma = (\alpha,\beta) \in \N^n = \N^k \times \N^\ell$ we  write $\partial^\gamma \colonequals \frac{\partial^{\alpha_1}}{\partial x_1^{\alpha_1}} \cdots  \frac{\partial^{\alpha_k}}{\partial x_k^{\alpha_k}}  \frac{\partial^{\beta_1}}{\partial y_1^{\beta_1}} \cdots \frac{\partial^{\beta_\ell}}{\partial y_\ell^{\beta_\ell}}$. 

We treat the  $p<\infty$ and $p=q=\infty$ cases of the local problem separately in the following two lemmas.  The bulk of our effort will go towards establishing the second of these.
%The first of these is quite trivial, the second comparatively involved.

\begin{lemma}\label{taylor}
If $f \in C^\infty(\R^n)$ vanishes to order $p+q$ on   $\{0\} \times \R^\ell$, where $p \in \N$, $q \in \N \cup \{\infty\}$, then one can write
\[ f(x,y) = \sum_{|\alpha| = p} x^\alpha f_\alpha(x,y), \]
where each $f_\alpha$ belongs to $C^\infty(\R^n)$ and vanishes to order $q$ on $\{0\} \times \R^\ell$. 
\end{lemma}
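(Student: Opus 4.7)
The approach is to apply Taylor's theorem with integral remainder in the $x$-variables, treating the $y$-variables as parameters. Since $f$ is assumed smooth on all of $\R^n$ and $p \in \N$ is finite, the standard expansion
\[
f(x,y) = \sum_{|\alpha|<p} \frac{(\partial^\alpha_x f)(0,y)}{\alpha!} x^\alpha + \sum_{|\alpha|=p} \frac{p}{\alpha!} x^\alpha \int_0^1 (1-t)^{p-1} (\partial^\alpha_x f)(tx,y)\,dt
\]
holds pointwise, with the integrals varying smoothly in $(x,y)$ by differentiation under the integral sign.

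The hypothesis that $f$ vanishes to order $p+q \geq p$ on $\{0\} \times \R^\ell$ means in particular that $\partial^\alpha_x f$ vanishes on $\{0\} \times \R^\ell$ for every $|\alpha| < p$, so the finite Taylor sum drops out and one is left with the desired decomposition
\[
f(x,y) = \sum_{|\alpha|=p} x^\alpha f_\alpha(x,y), \qquad f_\alpha(x,y) := \frac{p}{\alpha!}\int_0^1 (1-t)^{p-1}(\partial^\alpha_x f)(tx,y)\,dt.
\]
Each $f_\alpha$ is manifestly smooth on $\R^n$.

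It remains to verify that each $f_\alpha$ vanishes to order $q$ on $\{0\} \times \R^\ell$. Writing a multi-index $\gamma \in \N^n$ as $(\delta,\beta)$ with $\delta \in \N^k$ and $\beta \in \N^\ell$ and differentiating under the integral sign, one finds
\[
(\partial^\gamma f_\alpha)(0,y) = \frac{p}{\alpha!}\,(\partial^{\alpha+\delta}_x \partial^\beta_y f)(0,y) \int_0^1 (1-t)^{p-1} t^{|\delta|}\,dt.
\]
The derivative of $f$ appearing here has total order $|\alpha|+|\gamma| = p + |\gamma|$, and by hypothesis vanishes on $\{0\} \times \R^\ell$ whenever $p+|\gamma| < p+q$, i.e.\ whenever $|\gamma| < q$. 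Hence $f_\alpha$ vanishes to order $q$ on $\{0\} \times \R^\ell$, as required; when $q = \infty$ this says $f_\alpha$ is flat on $\{0\} \times \R^\ell$.

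The argument is essentially routine; the only mild subtlety is keeping track of the convention that ``vanishing to order $r$'' means the vanishing of all partial derivatives of order strictly less than $r$, which lines up correctly with the count $p + |\gamma| < p+q \iff |\gamma| < q$ above. No separate treatment of $q=\infty$ is needed since the argument works uniformly in $q$.
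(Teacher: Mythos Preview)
Your argument is correct (with the tacit understanding that the trivial case $p=0$ is set aside, since the integral-remainder formula you invoke needs $p\ge 1$; the paper does this explicitly). Both proofs are Taylor-with-integral-remainder arguments, but the execution differs: the paper peels off one linear factor at a time via a coordinate-by-coordinate Hadamard identity
\[
f(x,y)=\sum_{i=1}^k x_i f_i(x,y),\qquad f_i(x,y)=\int_0^1 \tfrac{\partial f}{\partial x_i}(0,\dots,0,tx_i,x_{i+1},\dots,x_k,y)\,dt,
\]
checks that each $f_i$ vanishes to order $p+q-1$, and then recurses $p$ times. You instead jump straight to the order-$p$ remainder along the ray $t\mapsto(tx,y)$, obtaining the full decomposition in one stroke together with an explicit formula for each $f_\alpha$. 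Your route is marginally more direct and makes the vanishing-order bookkeeping a single clean computation; the paper's route uses only first-order calculus at each step and makes the inductive structure transparent. Either way, the verification that the coefficients vanish to the right order comes down to differentiating under the integral sign and counting derivative orders, exactly as you carry out.
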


\begin{lemma}\label{inftaylor}
If $f \in C_c^\infty(\R^n)$ vanishes to order $\infty$ on $\{0\} \times \R^\ell$, then one can write 
\[ f(x,y) = \rho(x) h(x,y), \]
where $\rho \in C^\infty(\R^k)$ has a zero of order $\infty$ at $0$ and is strictly positive on $\R^k \setminus \{0\}$, and $h \in C^\infty(\R^n)$ vanishes to order $\infty$ on $\{0\} \times \R^\ell$. 
\end{lemma}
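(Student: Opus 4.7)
The plan is to construct $\rho$ explicitly via a dyadic decomposition of $\R^k$, with weights calibrated to the decay rate of $f$ along $Y = \{0\}\times\R^\ell$. The construction has three ingredients: uniform flatness bounds on $f$, a dyadic partition of unity on $\R^k$, and a diagonal choice of weights.

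First I would extract uniform flatness bounds: because $f$ is compactly supported and flat on $Y$, a routine iteration of Lemma~\ref{taylor} in the $x$-directions yields, for every $N \in \N$, a constant $A_N \geq 1$ such that
$$\sum_{|\gamma|\leq N} |\partial^\gamma f(x,y)| \leq A_N\,|x|^N \qquad \text{for all } (x,y)\in\R^n.$$
Next, I would fix a smooth partition of unity $\{\eta_j\}_{j\geq 0}$ on $\R^k$ subordinate to the dyadic annular cover $\Omega_j = \{2^{-j-1} < |x| < 2^{-j+1}\}$ for $j\geq 1$, with the standard scaling $|\partial^\alpha \eta_j| \leq C_\alpha 2^{j|\alpha|}$, and set $\rho(x) := \sum_j b_j\,\eta_j(x)$ for positive weights $\{b_j\}$ to be selected by a diagonal argument. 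The weights must satisfy two competing demands: $b_j$ has to decay super-polynomially in $2^{-j}$ (so $\rho$ is smooth and flat at $0$), yet only modestly so, with consecutive ratios $b_{j-1}/b_{j+1}$ growing at most polynomially in $2^j$ and with $A_{N(j)}\,2^{-jN(j)}/b_j$ still super-polynomially small in $2^{-j}$ for some integer function $N(j)\to\infty$ (so that $h=f/\rho$ and all its derivatives remain flat on $Y$). Concretely, I would take $N(j)$ to be the largest $N$ with $A_N \leq 2^{jN/2}$---this tends to infinity because each fixed $N$ eventually satisfies the inequality---and set $b_j := 2^{-jN(j)/4}$, suitably smoothed to control consecutive ratios.

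Verification then proceeds by direct estimation. Positivity of $\rho$ on $\R^k\setminus\{0\}$ is immediate from $\sum\eta_j\equiv 1$; flatness of $\rho$ at $0$ follows from $\partial^\alpha \rho = O(b_j 2^{j|\alpha|})$ on $\Omega_j$, which is $o(|x|^M)$ for every $M$ by the super-polynomial decay of $b_j$. For $h = f/\rho$, the Leibniz rule combined with Fa\`{a} di Bruno's formula for $\partial^\gamma(\rho^{-1})$ produces, on $\Omega_j$, bounds of the shape
$$|\partial^\gamma h(x,y)| \leq C_\gamma\,b_j^{-|\gamma|-1}\,2^{j\cdot\mathrm{poly}(|\gamma|)}\,\max_{|\gamma_1|\leq|\gamma|}|\partial^{\gamma_1}f(x,y)|.$$
Substituting the uniform flatness bound with $N=N(j)$ and the definition of $b_j$ gives $|\partial^\gamma h| = o(|x|^M)$ for every $M$, from which a standard extension argument yields $h \in C^\infty(\R^n)$ with all derivatives vanishing on $Y$.

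The main obstacle will be the diagonal construction of $\{b_j\}$. The tension is that $b_j$ must decay faster than every power of $2^{-j}$ (else $\rho$ is not flat), but the consequent rapid blowup of $\rho^{-1}$ and its derivatives must still be dominated by the flatness of $f$ and all its derivatives. The argument succeeds only because the flatness of $f$ furnishes an infinite hierarchy of decay bounds $A_N|x|^N$, giving enough slack to slot in a flat $\rho$ whose decay is strictly slower than that of every derivative of $f$.
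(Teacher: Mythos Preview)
Your approach is sound and genuinely different from the paper's. The paper reduces the problem to one dimension: it introduces the auxiliary functions $f_{\gamma,m,r}(t)=\sup_{|x|\le t,\,|y|\le r}|\partial^\gamma f(x,y)|^{1/m}$, observes that each vanishes to infinite order at $t=0$, and then invokes a Schwartz-envelope lemma (Lemma~\ref{decay0}, itself obtained by inverting the classical Schwartz envelope result of Lemma~\ref{envelope}) to produce a single smooth $\varphi$ on $\R$, flat at $0$ and positive for $t>0$, dominating all of them. Setting $\rho(x)=\varphi(|x|)$ and checking smoothness of $f/\rho$ is then packaged into Lemmas~\ref{radialfn} and~\ref{fonglemma}. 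The $1/m$ exponent is the clever device that yields $|\partial^\gamma f|/\rho^m\to 0$ for \emph{every} power $m$, which is exactly the hypothesis of Lemma~\ref{fonglemma}. Your dyadic construction is more hands-on and self-contained; the paper's route is more modular, separating the envelope existence (a reusable 1D fact) from the quotient smoothness check.

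One point to tighten: your displayed Fa\`{a}~di~Bruno bound $|\partial^\gamma h|\lesssim b_j^{-|\gamma|-1}2^{j\cdot\mathrm{poly}(|\gamma|)}\max|\partial^{\gamma_1}f|$ is too crude to close the loop with your weight $b_j=2^{-jN(j)/4}$. Indeed, plugging in $A_{N(j)}\le 2^{jN(j)/2}$ gives an exponent $jN(j)\big(\tfrac{|\gamma|+1}{4}+\tfrac12-1\big)$, which is positive once $|\gamma|\ge 2$. The fix is that, with consecutive ratios $b_{j\pm1}/b_j$ polynomially controlled as you stipulate, each factor $\partial^\beta\rho$ in the Fa\`a~di~Bruno expansion carries a $b_j$ in the \emph{numerator}, so the net power of $b_j$ collapses to $b_j^{-1}$ rather than $b_j^{-|\gamma|-1}$; the ratio factors are absorbed into the $2^{j\cdot\mathrm{poly}(|\gamma|)}$. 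With this sharper bound your weight choice does work. Alternatively, keep the crude bound and choose $b_j$ decaying more slowly, via a more careful diagonal.
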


It is a simple matter to derive Theorem~\ref{technicaltool} from these lemmas.

\begin{proof}[Proof of Theorem~\ref{technicaltool}]
We just need to show $J_{p+q} \subset I_p \cdot J_q$, the reverse containment being obvious. Suppose, therefore, that $f \in J_{p+q}$. Using a partition of unity argument and standard facts about the local structure of submersions and submanifolds, we may assume one of the following two alternatives holds:
\begin{enumerate}[(i)]
\item The support of $f$ is disjoint from $Y$. 
\item The support of $f$ is contained in an open set $U$ such that:
\begin{enumerate}[(a)]
\item $U$ is diffeomorphic to $\R^k \times \R^{\ell_1} \times \R^{\ell_2}$ and  $\pi(U)$ is diffeomorphic to $\R^k \times \R^{\ell_1}$,  
\item under these diffeomorphisms, $\pi  : U \to \pi(U)$ identifies with the standard projection $\R^k \times \R^{\ell_1} \times \R^{\ell_2} \to \R^k \times \R^{\ell_1}$, and
\item under these diffeomorphisms, $X \cap \pi(U)$ identifies $\{0\} \times \R^{\ell_1}$, so that $Y \cap U$ identifies with $\{0\} \times \R^{\ell_1} \times \R^{\ell_2}$. 
\end{enumerate}
\end{enumerate}
If (i) is satisfied, take any $g \in C_c^\infty(M)$ that is equal to $1$ on $\pi( \mathrm{supp}(f))$ and equal to $0$ outside  of some open set  not intersecting $X$. Then,  $f=(g \circ \pi) f$, where $g$ vanishes to order $\infty \geq p$ on $X$ and $f$ vanishes to order $p+q \geq q$ on $Y$. 

If (ii) is satisfied and $p<\infty$, then,   applying Lemma~\ref{taylor} in the given chart with $\ell = \ell_1 + \ell_2$, we may assume that  $f = x^\alpha h$, where $|\alpha| = p$ and $h \in C_c^\infty(\R^k \times \R^{\ell_1} \times \R^{\ell_2})$ vanishes to order $q$ on $\{0\} \times \R^{\ell_1} \times \R^{\ell_2}$. Then, $f=(g \circ \pi) h$, where $g \in C_c^\infty(\R^k \times \R^{\ell_1})$ is given by $g=x^\alpha \varphi$ for an appropriate cutoff function $\varphi \in C_c^\infty(\R^k \times \R^{\ell_1})$. Obviously, $g$ vanishes to order $p$ on $\{0\} \times \R^{\ell_1}$. The expression $f =( g \circ \pi) h$ can be made global simply by extending $g$ and $h$ to be identically $0$ outside of $\pi(U)$ and $U$, respectively.

If (ii) is satisfied and $q=\infty$, we proceed in the same way using Lemma~\ref{inftaylor}. First, in the given chart, write $f(x,y) = \rho(x) h(x,y)$ where $\rho \in C_c^\infty(\R^k)$ has a zero of infinite order at $0$ and is strictly positive on $\R^k \setminus \{0\}$, and $h \in C^\infty_c(\R^k \times \R^{\ell_1} \times \R^{\ell_2})$ vanishes to order $\infty$ on $\{0\} \times \R^{\ell_1} \times \R^{\ell_2}$. Then, $f=(g\circ \pi)h$, where $g \in C_c^\infty(\R^k \times \R^{\ell_1})$ is given by $g = \rho \cdot  \varphi$ for an appropriate cutoff function $\varphi \in C_c^\infty(\R^k \times \R^{\ell_1})$. Obviously $g$ vanishes to order $\infty \geq p$ on $\{0\} \times \R^{\ell_1}$. 
\end{proof}

It remains to prove the Lemmas~\ref{taylor} and \ref{inftaylor}. 
%The first  of these  is elementary.

\begin{proof}[Proof of Lemma~\ref{taylor}]
If $p=0$, there is nothing to prove, so assume $p  \geq 1$. It clearly suffices to prove that we can write
\[  f(x,y) = x_1 f_1(x,y) + \ldots x_k f_k(x,y) \]
where $f_i \in C^\infty(\R^n)$ vanish to order $p+q-1$ and proceed recursively. The functions $f_1,\ldots,f_k$ defined by
\begin{align*}
f_1(x,y) &= \int_0^1 \tfrac{\partial f}{\partial x_1}( tx_1,x_2,\ldots,x_k,y_1,y_2,\ldots,y_\ell)  \ dt \\
f_2(x,y) &= \int_0^1 \tfrac{\partial f}{\partial x_2}(  0,tx_2,x_3,\ldots, x_k,y_1,y_2,\ldots,y_\ell)  \ dt \\
\vdots &  \\
f_k(x,y) &= \int_0^1 \tfrac{\partial f}{\partial x_k}( 0, \ldots,0,t x_k,y_1,y_2,\ldots,y_\ell)  \ dt  
\end{align*}
serve this purpose.
\end{proof}

The proof of Lemma~\ref{inftaylor} will rely on several further lemmas, specifically Lemmas~\ref{decay0}, \ref{radialfn} and \ref{fonglemma}. Recall that a function $f$ on $[1,\infty)$ is  \emph{rapidly decaying} if $\lim_{t \to \infty} t^m f(t)=0$ for every nonnegative integer $m$. We say  $f$  is a \emph{Schwartz function} if it is $C^\infty$ and it and all its derivatives are rapidly decaying. Since there are many more functions of rapid decay than there are Schwartz functions, it seems plausible  that there could exist a  function of rapid decay that vanishes more slowly than any Schwartz function. The following lemma shows this does not occur by providing a ``Schwartz envelope'' for any rapidly decaying function. The original reference for this fact may be \cite{Chevalley}, Lemma~3.6, pp.~127. One can also find it in the expository note \cite{Garrett}.

\begin{lemma}\label{envelope}\leavevmode 
\begin{enumerate}
\item If $f$ is a bounded, rapidly decaying function on $[1,\infty)$, then there exists a positive-valued, monotone decreasing Schwartz function $g$ on $[1,\infty)$ such that $|f| \leq g$. 
\item If $(f_k)$ is a sequence of rapidly decaying functions on $[1,\infty)$, then there exists a positive-valued Schwartz function $g$ on $[1,\infty)$ such that $\lim_{t \to \infty} \frac{f_k(t)}{g(t)} = 0$ for all $k$.
\end{enumerate}
\end{lemma}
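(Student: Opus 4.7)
For part (1), the plan is to construct $g$ as a smoothed version of the decreasing rearrangement of $|f|$, plus a safety term. First I would set $M(t) \colonequals \sup_{s \geq t} |f(s)|$ on $[1,\infty)$; this is bounded, monotone decreasing, and rapidly decaying because $t^m M(t) \leq \sup_{s \geq t} s^m |f(s)| \to 0$ for every nonnegative integer $m$. To smooth $M$ while preserving both monotonicity and the pointwise domination $\geq |f|$, I would fix a nonnegative bump $\rho \in C_c^\infty(\R)$ with $\int \rho = 1$ and $\mathrm{supp}(\rho) \subset [0,1]$, and set $\widetilde M(t) \colonequals \int M(t-s)\rho(s)\,ds$ (extending $M$ to be constant on $[0,1]$ for definiteness). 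Since $\rho \geq 0$, convolution with $\rho$ preserves monotonicity; and because $\rho$ is supported where $t-s \leq t$ and $M$ is decreasing, $\widetilde M(t) \geq M(t) \geq |f(t)|$. The identity $\widetilde M^{(k)}(t) = \int M(t-s)\rho^{(k)}(s)\,ds$ together with $|\widetilde M^{(k)}(t)| \leq \|\rho^{(k)}\|_\infty M(t-1)$ shows every derivative is rapidly decaying, so $\widetilde M$ is Schwartz. Adding $e^{-t^2}$ enforces strict positivity without disturbing the other properties, so $g \colonequals \widetilde M + e^{-t^2}$ does the job.

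For part (2), the plan is to bootstrap from part (1) via a weighted series. After observing that each $f_k$ may be replaced by a bounded rapidly decaying function without loss (rapid decay yields $|f_k| \leq 1$ on $[T_k,\infty)$, and the behaviour on $[1,T_k]$ can be absorbed into a smooth cap), part (1) supplies positive Schwartz envelopes $h_k \geq |f_k|$. I would then set
\[ g(t) \colonequals e^{-t^2} + \sum_{k=1}^\infty c_k\, t^k\, h_k(t), \]
choosing scalars $c_k > 0$ small enough that the series converges in the Fr\'echet topology of the Schwartz space. Concretely, each seminorm $p_{n,m}(\varphi) \colonequals \sup_t |t^n \varphi^{(m)}(t)|$ of $t^k h_k$ is finite, so the diagonal choice $c_k \leq 2^{-k}\bigl(1 + \max_{n,m \leq k} p_{n,m}(t^k h_k)\bigr)^{-1}$ ensures that, for each fixed $(n,m)$, the tail of the series is dominated by a geometric sum, making $g$ Schwartz. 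By positivity of every summand, $g(t) \geq c_k t^k h_k(t) \geq c_k t^k |f_k(t)|$, so $|f_k(t)|/g(t) \leq (c_k t^k)^{-1} \to 0$.

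The main obstacle is the smoothness-versus-domination tension in part (1): the natural candidate $M$ is monotone and rapidly decaying but typically has jumps, while a symmetric mollification would in general destroy the inequality $g \geq |f|$. The one-sided support of the mollifier $\rho$, combined with monotonicity of $M$, is precisely what rescues both features simultaneously. Part (2) is then standard: the flexibility of the Fr\'echet structure of $\mathcal{S}$ always allows one to outpace any countable family of rapidly decaying functions by a suitable diagonal construction.
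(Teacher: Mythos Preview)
Your argument is correct and follows essentially the same route as the paper. In part~(1) the paper also passes to the monotone envelope $M(t)=\sup_{s\ge t}|f(s)|$ and then convolves with a nonnegative bump supported in $[0,1]$, observing (as you do) that the one-sided support is exactly what makes the mollified function dominate $M$; your explicit addition of $e^{-t^2}$ to force strict positivity is a harmless cosmetic improvement. In part~(2) the paper likewise invokes part~(1) to obtain envelopes $g_k$, then multiplies by $t$ to force $f_k/g_k\to 0$ and sums with diagonally chosen coefficients; your use of the factor $t^k$ in place of $t$ is a minor variant that makes the estimate $|f_k|/g\le (c_k t^k)^{-1}$ slightly more explicit, but the underlying mechanism is identical.
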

\begin{proof}For the first part, assume without loss of generality that  $f$ is monotone decreasing, or else replace it by $t \mapsto \sup_{s \geq t} |f(s)|$. Let $\varphi \in C^\infty(\R)$ be a nonnegative-valued function with support contained in $[0,1]$ satisfying $\int_\R \varphi(t) \ dt = 1$. Define $g$ to be the convolution $\varphi * f$, that is, $g(t) = \int_0^1 \varphi(s) f(t - s)  \ ds$. We remark that there is a small issue with this definition of $g$ near $t=1$, but this is easily fixed by enlarging the domain of $f$, say by defining $f(t) = \sup_{s \geq 1} f(s)$ for $t \leq 1$. It is easy to see that $g$ is monotone decreasing and that $f \leq g$. One can check that the convolution of two rapidly decaying functions is rapidly decaying (imposing sufficient regularity properties so that convolution makes sense), and it follows that the convolution of a rapidly decaying function with a Schwartz function is Schwartz (since the derivatives can be put on the Schwartz function). 

For the second part, assume without loss of generality that each $f_k$ is bounded and use the first part to produce, for each $k$, a positive-valued Schwartz function $g_k$ such that $\lim_{t \to \infty} \frac{f_k(t)}{g_k(t)} = 0$ (if  $f_k \leq g_k$ holds, but $\frac{f_k}{g_k}$ does not vanish at infinity, replace $g_k$  with  $t \mapsto t g_k(t)$). An easy diagonal selection argument guarantees the existence of constants $c_k > 0$ such that $g = \sum c_k g_k$ is a Schwartz function. Since $g > g_k$, it is clear that $\frac{f_k}{g}$ vanishes at infinity for every $k$. 
\end{proof}
The next step is to convert Lemma~\ref{envelope} into a statement about smooth functions with an infinite order zero at $0$ by performing an inversion in the variable. Much more sophisticated accounts of the connection between Schwartz functions and functions that remain smooth after being extended by zero can be found in the literature, see \cite{Aizenbud-Gourevitch}, Theorem 5.4.1. For present purposes, the simple-minded lemma below is  enough.

 \begin{lemma}\label{corresp}
The inversion map $t \mapsto 1/t: (0,1] \to [1,\infty)$ puts functions $f$ on $(0,1]$ with $\lim_{t \to 0^+} f(t) t^{-m} = 0$ for all positive integers $m$ into bijection with the rapidly decaying functions on $[1,\infty)$, and also puts the smooth functions $f$ on $(0,1]$ for which putting $f(t) = 0$ for $t \leq 0$ yields a smooth extension into bijection with the Schwartz functions on $[1,\infty)$. 
\end{lemma}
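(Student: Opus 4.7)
The plan is to directly verify both bijections in turn via the substitution $t = 1/s$, using only the chain rule and Taylor's theorem.

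For the first bijection, observe that if $g(s) = f(1/s)$, then $g(s) \cdot s^m = f(t) \cdot t^{-m}$ where $t=1/s$. Hence $g(s) s^m \to 0$ as $s \to \infty$ if and only if $f(t) t^{-m} \to 0$ as $t \to 0^+$, so $g$ is rapidly decaying iff $f$ satisfies the stated vanishing condition. This part is essentially a tautology.

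For the second (and main) bijection, the strategy is to translate the two smoothness conditions into a common set of pointwise estimates. On the $(0,1]$ side, extendability by zero means exactly that $f$ is $C^\infty$ on $(0,1]$ and $\lim_{t \to 0^+} f^{(k)}(t) = 0$ for every $k \geq 0$. By applying Taylor's theorem around $0$ to $f^{(k)}$ (noting that $f^{(k+j)}(0)=0$ for all $j \geq 0$), this is in turn equivalent to: $f$ is smooth on $(0,1]$ and for every $k,n \geq 0$, $f^{(k)}(t) = o(t^n)$ as $t \to 0^+$. On the $[1,\infty)$ side, being Schwartz means $g$ is smooth and $g^{(k)}(s) = o(s^{-n})$ for every $k,n \geq 0$. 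It remains to check that these two families of estimates correspond under $s=1/t$.

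For this, I would invoke Faà di Bruno's formula applied to $g(s)=f(1/s)$ and, conversely, to $f(t)=g(1/t)$: the $n$-th derivative of either side is a finite linear combination of terms of the form $f^{(k)}(1/s) \cdot s^{-a}$ (respectively $g^{(k)}(1/t) \cdot t^{-b}$) where $1 \leq k \leq n$ and $a,b$ are nonnegative integers depending only on $n$ and $k$. The substitution then converts the estimate $f^{(k)}(t) = o(t^n)$ for all $n$ into the estimate $g^{(n)}(s) = o(s^{-N})$ for all $N$, and vice versa, since a finite power of $s$ (or $1/t$) is absorbed by the flatness/rapid decay on the other side. Combined with the equivalences of the previous paragraph, this establishes the second bijection.

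The main technical annoyance will be bookkeeping the Faà di Bruno expansion cleanly enough that the implications in both directions are manifest; the analytic content is all in the elementary Taylor estimate. Since only polynomial factors in $s$ or $t$ appear, no single derivative interacts with more than finitely many others, so the argument factors through one $k$ at a time and no uniformity issue arises.
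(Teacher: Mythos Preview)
Your argument is correct. The first bijection is handled identically to the paper (it is indeed tautological). For the second, your route differs from the paper's: you pass through the intermediate characterisation ``$f^{(k)}(t)=o(t^n)$ for all $k,n$'' (obtained via Taylor's theorem) and then match the two families of estimates under the inversion using Fa\`a di Bruno. The paper instead argues by a one-line induction: if $f$ is Schwartz and $g(t)=f(1/t)$, then $g'(t)=f_1(1/t)$ with $f_1(s)=-s^2f'(s)$ again Schwartz, so every $g^{(k)}$ has the form $f_k(1/t)$ for some Schwartz $f_k$ and hence tends to $0$; Lemma~\ref{smooth1D} then gives the smooth extension. The converse is dismissed with ``proceeds similarly''.

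What each approach buys: the paper's induction is slicker for the direction it actually writes out, since the polynomial factors are absorbed back into the Schwartz class without ever naming Fa\`a di Bruno. Your approach is more symmetric and more honest about the converse: the direction ``extends by zero $\Rightarrow$ Schwartz'' genuinely requires knowing $f^{(k)}(t)=o(t^n)$ (not merely $f^{(k)}(t)\to 0$), which is exactly the Taylor input you isolate, whereas the paper's ``similarly'' leaves this implicit. Your bookkeeping worry about Fa\`a di Bruno is unfounded: all you need is that $g^{(n)}(s)$ is a finite $\mathbb{R}$-linear combination of terms $f^{(k)}(1/s)\,s^{-a}$ with $1\le k\le n$ and $a\ge 0$, which follows immediately by induction on $n$ without invoking the full formula.
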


To prove Lemma~\ref{corresp}, we need the following simple fact. 

\begin{lemma}\label{smooth1D}
Suppose $f$ is a smooth function on $(0,\infty)$ with $\lim_{t \to 0^+} f^{(m)}(t) = 0$ for every nonnegative integer $m$. Then, in setting $f(t) = 0$ for $t \leq 0$, one obtains a $C^\infty$ extension of $f$ to all of $\R$.
\end{lemma}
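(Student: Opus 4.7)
The plan is to prove by induction on $m \geq 0$ that the zero extension $\tilde f$ (defined by $\tilde f(t) = f(t)$ for $t > 0$ and $\tilde f(t) = 0$ for $t \leq 0$) has a continuous $m$-th derivative on all of $\R$, with $\tilde f^{(m)}(t) = f^{(m)}(t)$ for $t > 0$ and $\tilde f^{(m)}(t) = 0$ for $t \leq 0$. The base case $m = 0$ is immediate: $\tilde f$ is smooth on $\R \setminus \{0\}$, and continuity at $0$ is exactly the hypothesis $\lim_{t \to 0^+} f(t) = 0$ matched against $\tilde f(0) = 0$.

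For the inductive step, assume the statement at level $m - 1$. Away from $0$ the $m$-th derivative of $\tilde f$ exists and agrees with the prescribed values, so the only issue is differentiability and continuity at $t = 0$. From the left, the difference quotient $\bigl(\tilde f^{(m-1)}(t) - \tilde f^{(m-1)}(0)\bigr)/t$ is identically $0$. From the right it equals $f^{(m-1)}(t)/t$; since the hypothesis gives $\lim_{s \to 0^+} f^{(m-1)}(s) = 0$, we may continuously extend $f^{(m-1)}$ to $[0,t]$ with value $0$ at the origin and apply the mean value theorem to obtain $f^{(m-1)}(t)/t = f^{(m)}(\xi_t)$ for some $\xi_t \in (0,t)$. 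As $t \to 0^+$, $\xi_t \to 0^+$, and the hypothesis $\lim_{s \to 0^+} f^{(m)}(s) = 0$ forces this quotient to $0$. Thus both one-sided derivatives are $0$, yielding $\tilde f^{(m)}(0) = 0$, and the same limit computation shows $\tilde f^{(m)}$ is continuous at $0$.

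There is no serious obstacle; the whole argument is a clean induction powered by a single application of the mean value theorem. The one point worth care is making sure $f^{(m-1)}$ really does admit a continuous extension to the closed interval $[0,t]$ before invoking the MVT — but this is exactly what the vanishing hypothesis on all derivatives provides. Iterating gives $\tilde f \in C^\infty(\R)$, as required.
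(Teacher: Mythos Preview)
Your proof is correct and follows essentially the same approach as the paper: induction on the order of the derivative, with the mean value theorem supplying the vanishing of the one-sided derivative at the origin. The paper's proof is just a two-sentence sketch of exactly this argument.
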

\begin{proof}
An application of the mean value theorem shows the extension is differentiable with derivative $0$  at the origin. The statement follows by induction.
\end{proof}

\begin{proof}[Proof of Lemma~\ref{corresp}]
The first correspondence is obvious. Towards the second, suppose $f$ is a Schwartz function on $[1,\infty)$ and define $g$ on $(0,1]$ by $g(t) = f(1/t)$. Then, $g'(t) = f_1(1/t)$, where $f_1(t) = - t^2 f'(t)$ is yet another Schwartz function. By induction, each derivative of $g$ has the form $f_k(1/t)$ for some Schwartz function $f_k$ on $[1,\infty)$. In particular, $\lim_{t \to 0^+} g^{(k)}(t) = 0$ for all $k$ so that, by Lemma~\ref{smooth1D}, setting $g(t) = 0$ for $t \leq 0$ effects a smooth extension of $g$. The converse direction, that $f(t) = g(1/t)$ is a Schwartz function on $[1,\infty)$ when $g$ is a smooth function with $g(t)=0$ for $t \leq 0$, proceeds similarly.  
\end{proof}

Applying the correspondence of Lemma~\ref{corresp}, the second part of Lemma~\ref{envelope} translates to the following.

\begin{lemma}\label{decay0}
Let $f_k$ be a sequence of functions on $[0,\infty)$ that vanish to infinite order at $0$, i.e. $f_k(0) = 0$ and $\lim_{t \to 0^+} f_k(t) t^{-m} = 0$ for all $m$. Then, there exists a $C^\infty$ function $g$ on $\R$ with $g(t) = 0$ for $t \leq 0$, and $g(t) > 0$ for $t>0$ such that $\lim_{t \to 0^+} \frac{f_k(t)}{g(t)} =0$ for all $k$. 
\end{lemma}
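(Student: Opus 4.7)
The plan is to reduce Lemma~\ref{decay0} directly to the second part of Lemma~\ref{envelope} via the inversion correspondence established in Lemma~\ref{corresp}. Concretely, for each $k$, I would form the function $F_k$ on $[1,\infty)$ defined by $F_k(s) = f_k(1/s)$. The hypothesis that $f_k$ vanishes to infinite order at $0$ translates, under the first half of Lemma~\ref{corresp}, to the statement that each $F_k$ is a rapidly decaying function on $[1,\infty)$.

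Now I would apply Lemma~\ref{envelope}(2) to the sequence $(F_k)$, producing a positive-valued Schwartz function $G$ on $[1,\infty)$ with $\lim_{s \to \infty} F_k(s)/G(s) = 0$ for every $k$. I would then define $g$ on $(0,1]$ by $g(t) = G(1/t)$. By the second half of Lemma~\ref{corresp}, setting $g(t) = 0$ for $t \leq 0$ yields a $C^\infty$ extension of $g$ to $(-\infty, 1]$, which is strictly positive on $(0,1]$. The desired limit follows from the substitution $s = 1/t$, since
\[
\lim_{t \to 0^+} \frac{f_k(t)}{g(t)} = \lim_{s \to \infty} \frac{F_k(s)}{G(s)} = 0.
\]

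The only remaining issue is that so far $g$ is only defined on $(-\infty, 1]$, whereas the conclusion asks for a $C^\infty$ function on all of $\R$ that is positive on $(0, \infty)$. This is a routine extension step: pick a smooth cutoff $\chi$ equal to $1$ on $(-\infty, 1/2]$ and supported in $(-\infty, 1)$, and define $\widetilde g(t) = \chi(t) g(t) + (1 - \chi(t)) h(t)$, where $h$ is any strictly positive smooth function on $\R$ (e.g.\ $h \equiv 1$). Near $t = 0$ the cutoff $\chi$ equals $1$, so $\widetilde g$ agrees with $g$ and in particular vanishes for $t \leq 0$ and still satisfies the required limit; on $(1/2, \infty)$ positivity is preserved because $g \geq 0$ there and $h > 0$.

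I do not expect any real obstacle here; the content of the lemma is essentially already packaged in Lemma~\ref{envelope}(2) and Lemma~\ref{corresp}. The closest thing to a subtlety is keeping straight that the correspondence from Lemma~\ref{corresp} carries \emph{infinite-order vanishing at $0$ plus smooth extension by $0$} on the left side to \emph{Schwartz behavior at $\infty$} on the right side, so that both hypotheses and conclusions transfer cleanly; once this bookkeeping is done, the argument is a three-line composition of previously established statements with a trivial extension tacked on.
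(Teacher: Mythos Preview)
Your proposal is correct and follows exactly the route the paper indicates: the paper derives Lemma~\ref{decay0} by transporting Lemma~\ref{envelope}(2) through the inversion correspondence of Lemma~\ref{corresp}, and you have simply spelled out the details (including the routine extension of $g$ from $(-\infty,1]$ to all of $\R$, which the paper leaves implicit).
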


\begin{rmk}
A direct proof of Lemma~\ref{decay0} was given by George Lowther at \cite{MO} (Lemma 2). Nonetheless, as the Schwartz function formulation of this result appears to be   better known, it seemed worthwhile to draw out this connection here.
\end{rmk}

Lemmas~\ref{radialfn} and \ref{fonglemma}, stated and proved below, will rely on the following mild generalization of  Lemma~\ref{smooth1D} whose proof we omit. Recall that $n = k+ \ell$ and  $\R^n = \R^k \times \R^\ell$ has coordinates $(x,y) = (x_1,\ldots,x_k, y_1,\ldots,y_\ell)$.

\begin{lemma}\label{smoothnd}
Let $f$ be a smooth function on $\R^n \setminus (\{0\} \times \R^\ell)$ such that, for every $\gamma \in \N^n$, the  partial derivative  $\partial^\gamma f$ has limit zero at every point of $\{0\} \times \R^\ell$. Then, $f$ extends to a $C^\infty$ function on all of $\R^n$ vanishing to infinite order on $\{0\} \times \R^\ell$. 
\end{lemma}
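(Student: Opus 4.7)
The plan is to prove by induction on $k$ that, for every $\gamma \in \N^n$ with $|\gamma| \leq k$, the function $\tilde g_\gamma : \R^n \to \R$ defined by $\tilde g_\gamma = \partial^\gamma f$ on $\R^n \setminus (\{0\} \times \R^\ell)$ and $\tilde g_\gamma \equiv 0$ on $\{0\} \times \R^\ell$ is continuous and agrees with the classical $\gamma$-th partial derivative of $\tilde f \colonequals \tilde g_0$. Continuity of each $\tilde g_\gamma$ is immediate from the limit hypothesis, so the substantive work is to verify that the classical derivatives of $\tilde g_\gamma$ at points of $\{0\} \times \R^\ell$ exist and take the expected values. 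The mean value theorem, applied essentially as in Lemma~\ref{smooth1D}, is the key ingredient.

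The base case $k = 0$ is the continuity of $\tilde f$. For the step from $k$ to $k+1$, fix $\alpha \in \N^n$ with $|\alpha| = k+1$, choose an index $i$ with $\alpha_i \geq 1$, and write $\alpha = \beta + e_i$ where $|\beta| = k$. Off $\{0\} \times \R^\ell$, the identity $\partial_i \tilde g_\beta = \tilde g_\alpha$ is immediate from the smoothness of $f$, where $\partial_i$ denotes differentiation in the $i$-th coordinate direction. At a point $p_0 \in \{0\} \times \R^\ell$, two cases arise. If $e_i$ is a $y$-direction, then the slice $h \mapsto \tilde g_\beta(p_0 + h e_i)$ is identically zero, so the difference quotient is zero and matches $\tilde g_\alpha(p_0) = 0$. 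If $e_i$ is an $x$-direction, then for small $h \neq 0$ the open segment from $p_0$ to $p_0 + h e_i$ avoids $\{0\} \times \R^\ell$, so $t \mapsto \tilde g_\beta(p_0 + t h e_i)$ is continuous on $[0,1]$ and $C^1$ on $(0,1]$ with $t$-derivative $h \cdot g_\alpha(p_0 + t h e_i)$. The mean value theorem supplies $\theta_h \in (0,1)$ with $\tilde g_\beta(p_0 + h e_i) - \tilde g_\beta(p_0) = h \, g_\alpha(p_0 + \theta_h h e_i)$; dividing by $h$ and letting $h \to 0$ yields $\tilde g_\alpha(p_0) = 0$ by the continuity of $\tilde g_\alpha$, as required.

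Consequently $\partial_i \tilde g_\beta = \tilde g_{\beta + e_i}$ holds pointwise on all of $\R^n$ for every $|\beta| \leq k$ and every direction $i$; since each $\tilde g_{\beta + e_i}$ is continuous, every such $\tilde g_\beta$ is $C^1$, and combining with the inductive hypothesis upgrades $\tilde f$ to a $C^{k+1}$ function with $\partial^\gamma \tilde f = \tilde g_\gamma$ for all $|\gamma| \leq k+1$. Letting $k$ range over all nonnegative integers yields the desired smooth extension; infinite vanishing order on $\{0\} \times \R^\ell$ is automatic, because every $\partial^\gamma \tilde f = \tilde g_\gamma$ is identically zero on that set by construction. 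The only nontrivial point is the $x$-direction mean value step, which is the multivariate counterpart of the inductive mechanism behind Lemma~\ref{smooth1D}; everything else is bookkeeping.
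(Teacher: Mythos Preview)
Your argument is correct. The paper actually omits the proof of this lemma entirely, noting only that it is a ``mild generalization of Lemma~\ref{smooth1D}'' (the one-dimensional case), whose proof reads: ``An application of the mean value theorem shows the extension is differentiable with derivative $0$ at the origin. The statement follows by induction.'' Your proof is precisely the natural multivariable implementation of that sketch: induction on the order of differentiation, with the mean value theorem handling the transverse ($x$-direction) difference quotients and the tangential ($y$-direction) case being trivial. A couple of cosmetic remarks: you write $g_\alpha$ in two places where you mean $\tilde g_\alpha$, and the qualifier ``for small $h$'' is unnecessary since the open segment avoids $\{0\}\times\R^\ell$ for every $h\neq 0$. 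Neither affects the validity of the argument.
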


In particular, when $\ell=0$, the above says that a smooth function on $\R^k \setminus \{0\}$ with all higher partials vanishing at the origin extends smoothly to all of  $\R^k$.  This is helpful in checking the following.

\begin{lemma}\label{radialfn}
Let $\varphi$ be a smooth function on $\R$ with a  zero of infinite order at $0$. Then, the function  $f$ on $\R^k$ defined by $f(x)=\varphi(|x|)$, where $|x| = \sqrt{x_1^2 + \ldots + x_k^2}$, is a $C^\infty$ function on $\R^k$ with a zero of infinite order at $0$. 
\end{lemma}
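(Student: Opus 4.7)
The plan is to apply Lemma~\ref{smoothnd} with $\ell=0$, which reduces the problem to showing that $f$ is smooth on $\R^k \setminus \{0\}$ and that every partial derivative $\partial^\gamma f$ has limit zero at the origin. Smoothness on $\R^k \setminus \{0\}$ is immediate since $x \mapsto |x|$ is smooth there and $\varphi$ is smooth on $\R$.

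The core step is an inductive description of $\partial^\gamma f$ away from the origin. First I would verify by induction on $|\gamma|$ that there is a decomposition
\[
\partial^\gamma f(x) = \sum_{j=1}^{|\gamma|} \varphi^{(j)}(|x|)\, R_{\gamma,j}(x)
\qquad (x \in \R^k\setminus\{0\}),
\]
where each $R_{\gamma,j}$ is smooth on $\R^k\setminus\{0\}$ and positively homogeneous of degree $j - |\gamma|$. The base case $|\gamma|=1$ comes from $\partial_i f(x) = \varphi'(|x|)\, x_i/|x|$, and the inductive step is a straightforward application of the chain rule together with the identity $\partial_i |x| = x_i/|x|$, keeping track of the homogeneity degree as one differentiates.

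Next I would use that $\varphi$ vanishes to infinite order at $0$, hence so does each derivative $\varphi^{(j)}$ (by Taylor's theorem applied on each side of $0$). This gives, for any $N\in\N$, a constant $C_N$ with $|\varphi^{(j)}(t)| \le C_N |t|^N$ for $|t|\le 1$ and $j \le |\gamma|$. Combined with the homogeneity bound $|R_{\gamma,j}(x)| \le C |x|^{j - |\gamma|}$, this yields
\[
|\varphi^{(j)}(|x|)\, R_{\gamma,j}(x)| \le C C_N\, |x|^{N + j - |\gamma|},
\]
which tends to $0$ as $x \to 0$ provided $N$ is chosen large enough (e.g.\ $N > |\gamma|$). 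Summing over $j$, I conclude $\partial^\gamma f(x) \to 0$ as $x\to 0$ for every multi-index $\gamma$. Lemma~\ref{smoothnd} (with $\ell=0$) then produces the desired smooth extension of $f$ to all of $\R^k$ vanishing to infinite order at the origin.

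The only step that requires any real care is the inductive formula for $\partial^\gamma f$; the rest is just bookkeeping. I do not anticipate a serious obstacle, as the whole argument is a quantitative version of the observation that $\varphi^{(j)}$ decays faster than any polynomial at $0$, which easily absorbs the polynomial blow-up of the rational factors $R_{\gamma,j}$.
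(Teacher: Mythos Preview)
Your argument is correct and follows the same overall scheme as the paper: verify smoothness on $\R^k\setminus\{0\}$, control every $\partial^\gamma f$ near the origin by induction, and then invoke Lemma~\ref{smoothnd} with $\ell=0$. The difference lies only in how the induction is organized. You keep track of homogeneous rational factors $R_{\gamma,j}$ of degree $j-|\gamma|$ and offset their blow-up against the infinite-order vanishing of $\varphi^{(j)}$ by an explicit degree count. The paper instead observes that $\partial_i\big(\varphi(|x|)\big)=x_i\,\psi(|x|)$ where $\psi(t)=\varphi'(t)/t$ is \emph{again} a smooth function on $\R$ with an infinite-order zero at $0$; this recursive self-similarity lets one conclude immediately, without any homogeneity bookkeeping, that every higher partial is a polynomial combination of functions of the same type and hence vanishes at the origin. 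Your route is perfectly sound and perhaps more transparent about where the estimates come from; the paper's is shorter because it absorbs the troublesome $1/|x|$ factor directly into a new $\psi$ of the same class.
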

\begin{proof}
Obviously $f$ is smooth on $\R^k \setminus \{0\}$. On the latter domain, $\frac{\partial f}{\partial x_i}(x) = \psi(|x|)$ where $\psi(t)=\begin{cases} - \frac{\varphi(t)}{t} & t \neq 0 \\ 0 & t=0\end{cases}$ is another $C^\infty$ function on $\R$ with a zero of infinite order at $0$. By induction, $f$ satisfies the conditions of Lemma~\ref{smoothnd} (with $\ell =0$), whence is smooth as claimed.
\end{proof}

The next lemma gives sufficient conditions under which the quotient of two smooth functions on $\R^n$ that vanish to infinite order on $\{0\} \times \R^\ell$ is another such function.

\begin{lemma}\label{fonglemma}
Let $f$ and $g$ be  $C^\infty$ functions on $\R^n$ that vanish to infinite order on $\{0\} \times \R^\ell$  and assume $g >0$  on $\R^n \setminus ( \{0\} \times \R^\ell)$. If, for every $\gamma \in \N^n$ and $m \in \N$, the function $\frac{\partial^\gamma f}{g^m}$ has limit $0$ at each point of $\{0\} \times \R^\ell$, then $\frac{f}{g}$ extends to a $C^\infty$ function on all of $\R^n$ vanishing to infinite order on $\{0\} \times \R^\ell$. 
\end{lemma}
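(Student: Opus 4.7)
The plan is to apply Lemma~\ref{smoothnd} to $h := f/g$, which a priori is only defined and smooth on the open set $U := \R^n \setminus (\{0\} \times \R^\ell)$. To invoke that lemma, I must show every iterated partial derivative $\partial^\gamma h$ (taken on $U$) has limit $0$ at each point of $\{0\} \times \R^\ell$.

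The key structural step is an induction on $|\gamma|$ establishing that on $U$ one can write
\[
\partial^\gamma h \;=\; \sum_{j} \frac{(\partial^{\alpha_j} f)\, Q_j}{g^{m_j}},
\]
where each $\alpha_j \in \N^n$, each $m_j$ is a positive integer, and each $Q_j$ is a polynomial expression in the various partial derivatives of $g$ (in particular, $Q_j$ extends to a $C^\infty$, hence locally bounded, function on all of $\R^n$). The base case $\gamma = 0$ is the tautology $h = f/g$. For the inductive step, differentiating a single summand yields three terms via the product and quotient rules,
\[
\partial_i\!\left[\tfrac{(\partial^{\alpha_j} f)\, Q_j}{g^{m_j}}\right] = \tfrac{(\partial_i\partial^{\alpha_j} f)\,Q_j}{g^{m_j}} + \tfrac{(\partial^{\alpha_j} f)(\partial_i Q_j)}{g^{m_j}} - m_j\,\tfrac{(\partial^{\alpha_j} f)\,Q_j (\partial_i g)}{g^{m_j+1}},
\]
and each of these three terms again fits the template (with $Q$-factor smooth everywhere). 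Summing over $j$ closes the induction.

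Now fix $p \in \{0\} \times \R^\ell$. Each $Q_j$ is continuous at $p$ and hence bounded on a neighborhood of $p$. The hypothesis gives $(\partial^{\alpha_j} f)/g^{m_j} \to 0$ at $p$ for every choice of multi-index and exponent. Consequently each summand tends to $0$ at $p$, and therefore so does $\partial^\gamma h$. This holds for every $\gamma \in \N^n$ and every $p$ in the relevant closed submanifold, so Lemma~\ref{smoothnd} applies to $h$ and delivers a $C^\infty$ extension to $\R^n$ that vanishes to infinite order on $\{0\} \times \R^\ell$, as required.

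The only step demanding any genuine care is the bookkeeping in the induction, namely confirming that the template $(\partial^\alpha f)\,Q/g^m$ is stable under differentiation; the rest is an immediate application of the hypothesis together with the local boundedness of the smooth denominators-of-numerators $Q_j$. There is no analytic subtlety beyond what is already packaged in Lemma~\ref{smoothnd}.
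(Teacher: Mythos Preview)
Your proof is correct and follows essentially the same route as the paper: both arguments show, via the quotient-rule computation you wrote out, that every partial derivative of $f/g$ on $U$ is a finite sum of terms of the form $(\text{smooth function on }\R^n)\cdot \partial^\alpha f/g^m$, and then invoke Lemma~\ref{smoothnd}. The paper packages this as closure of the $C^\infty(\R^n)$-span of the $\partial^\alpha f/g^m$ under differentiation, while you phrase it as an explicit induction tracking that the coefficients are polynomials in derivatives of $g$; these are the same observation.
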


\begin{proof}
Let $\mathscr{F}$ denote the collection of all smooth functions on $\R^n \setminus (\{0\} \times \R^\ell)$ obtained as $C^\infty(\R^n)$-linear combinations of the functions $\frac{\partial^\gamma f}{g^m}$. By assumption, the functions in $\mathscr{F}$ all have limit $0$ at each point $\{0\} \times \R^\ell$. Observe that $\mathscr{F}$ is closed under taking partial derivatives. Indeed, if $\gamma$ is a multi-index, $m$ is a positive integer, $h \in C^\infty(\R^n)$ and $\partial$ is one of the first-order partials $\frac{\partial}{\partial x_i}$ or $\frac{\partial}{\partial y_j}$, then $\partial ( h \frac{\partial^\gamma f}{g^m} )= (\partial h) \frac{\partial^\gamma f}{g^m} + h \frac{\partial \circ \partial^\gamma f}{g^m} - m(\partial g)h \frac{\partial^\gamma f}{g^{m+1}}$. Thus, thinking of $\frac{f}{g}$ as a smooth function on $\R^n \setminus (\{0\} \times \R^\ell)$,  we have by induction that all of its higher order partial derivatives have limit $0$ at every point of $\{0\} \times \R^\ell$ and so, by  Lemma~\ref{smoothnd}, $\frac{f}{g}$ extends to a smooth function on $\R^n$ that vanishes to infinite order on $\{0\} \times \R^\ell$. 
 \end{proof}

We are now in a position to give the

\begin{proof}[Proof of Lemma~\ref{inftaylor}]
Suppose that $f \in C_c^\infty(\R^n)$ vanishes to order $\infty$ on $\{0\} \times \R^\ell$. Given  $\gamma \in \N^n$ and $m,r \in \N$, define a continuous function $f_{\gamma,m,r}$ on $[0,\infty)$ by
\[ f_{\gamma,m,r}(t) = \sup_{\substack{\ x \in \R^k \\  |x| \leq t }} \ \sup_{\substack{\ y \in \R^\ell \\  |y| \leq r}} | \partial^\gamma f(x,y)|^{1/m}. \]
The assumption that $f$ vanishes to infinite order on $\{0\} \times \R^\ell$ implies that each $f_{\gamma, m,r}$ vanishes 
  to infinite order at $t=0$, i.e. $\lim_{t \to 0^+} f_{\gamma,m,r}(t) t^{-s} = 0$ for any positive integer $s$. It therefore follows from Lemma~\ref{decay0} that there exists a $C^\infty$ function $\varphi$ on $\R$ vanishing to infinite order at $t=0$ with $\varphi(t)>0$ for $t>0$ such that $\lim_{t \to 0^+} \frac{ f_{\gamma,m,r}(t)}{\varphi(t)} =0$ for all $\gamma,m,r$. By Lemma~\ref{radialfn}, the function  $\rho$ on $\R^k$  defined by $\rho(x)=\varphi(|x|)$ is a $C^\infty$ function, positive on $\R^k \setminus \{0\}$ and vanishing to infinite order at $0$. By design, for any $\gamma \in \N^n$ and $m,r \in \N$, one has the bound
\[ \left| \frac{ \partial^\gamma f (x,y)}{\rho(x)^m} \right| \leq
\left( \frac{f_{\gamma, m,r}(|x|)}{\varphi(|x|)} \right)^m \]
for $x \neq 0$ and $|y| < r$, which shows that the left hand side vanishes as $x \to 0$. Thus, applying Lemma~\ref{fonglemma}, one has that $h(x,y) =  \frac{f(x,y)}{\rho(x)}$ extends smoothly to a function on all of $\R^n$ that vanishes to infinite order on $\{0\} \times \R^\ell$, completing the proof.
\end{proof}

\section{Product structure of ideals in the smooth convolution algebra of a Lie groupoid}

%We use a standard multi-index notation for partial derivatives; if $\R^n$ has coordinates $(x_1,\ldots, x_n)$ and $\alpha \in \N^n$, we write $\partial_\alpha \colonequals \frac{\partial^{\alpha_1}}{\partial x_1^{\alpha_1}} \cdots  \frac{\partial^{\alpha_n}}{\partial x_n^{\alpha_n}}$. A smooth function $f$ on $\R^n$ is said to \emph{vanish to $p$th order} on a set $X \subset \R^n$ if $\partial_\alpha f$ vanishes on $X$ whenever $|\alpha| \colonequals \alpha_1 + \ldots + \alpha_n < p$. Thus, for instance, ``vanishes to 1st order'' just means ``vanishes'' and ``vanishes to 0th order'' is a tautology. The notion of vanishing to a given order is coordinate independent and easily adapts to the smooth manifold setting.

%The goal of this section is Theorem~\ref{applthm}. Before considering the case of ideals in the convolution algebra $C_c^\infty(G)$ of a Lie groupoid, let us briefly discuss  the commutative setting. Let $X$ be a submanifold of a smooth manifold $M$. 

%In the C*-algebra context, there is essentially just one ideal in the commutative algebra $C_0(M)$ of continuous functions vanishing at infinity  that one would think to assign to $X$, namely the norm-closed ideal of all $f \in C_0(M)$ that vanish on $X$.

In this final section, we apply  our generalization of the Dixmier-Malliavin theorem to obtain the Lie groupoid analog of  Theorem~\ref{I2=I} by reducing it to its commutative counterpart. Throughout,  $G$ denotes  a Lie groupoid over the manifold $M$ with fixed Haar system $\lambda$ and we assume that $X \subset M$ is an  \emph{invariant} closed submanifold in the sense that $s^{-1}(X)=t^{-1}(X)$. The restriction $G_X \colonequals s^{-1}(X) = t^{-1}(X)$ of $G$ to $X$ is, in its own right, a Lie groupoid $G_X \rightrightarrows X$. The Haar system $\lambda$ on $G$ can be restricted to a Haar system $\lambda_X$ on $G_X$ and doing so makes the restriction map $C_c^\infty(G) \to C_c^\infty(G_X)$ into a homomorphism of the smooth convolution algebras. The kernel of this homomorphism is the ideal $J_1 \subset C_c^\infty(G)$ of functions that vanish on $G_X$. More generally, 
%we can consider the descending chain
%\[ C_c^\infty(G) = J_0 \supset J_1 \supset J_2 \supset J_3 \supset\ldots \supset J_\infty, \]
one can consider $J_p \subset C_c^\infty(G)$, the functions which vanish to $p$th order on $G_X$. It is simple to confirm that each $J_p$ is an ideal with respect to the convolution product (either by arguing directly, or by applying Proposition~\ref{pullout} below). The quotients $C_c^\infty(G) / J_p$ for $p > 1$ can be thought of as extensions  of the convolution algebra $C_c^\infty(G_X)$, fitting as they do into exact sequences of the form 
%\[ 0 \to I_1/I_p \to C_c^\infty(M)/I_p \to C_c^\infty(X) \to 0. \] 
\[ 0 \to J_1 / J_p \to C_c^\infty(G) / J_p \to C_c^\infty(G_X) \to 0. \]
Roughly speaking, the kernel $J_1/J_p$ contains Taylor series information up to order $p-1$ in directions transverse to $G_X$. 

 The Lie groupoid algebra analog of Theorem~\ref{I2=I} is the following:

\begin{thm}\label{J2=J}
Let $G \rightrightarrows M$ be a Lie groupoid with given Haar system. Let $X$ be an invariant, closed submanifold of $M$ and let $G_X \colonequals s^{-1}(X) = t^{-1}(X)$. Let $J_p  \subset  C_c^\infty(G)$ denote the ideal, with respect to convolution, of functions that vanish to order $p$ on $G_X$. Then,  
\begin{enumerate}
\item $J_\infty * J_\infty  = J_\infty$
\item $(J_1)^{*p} = J_p$ for every positive integer $p$. 
\end{enumerate}
\end{thm}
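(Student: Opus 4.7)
The easy inclusions $(J_1)^{*p} \subseteq J_p$ and $J_\infty * J_\infty \subseteq J_\infty$ will follow from the two ``pullout'' identities
\[ ((h \circ t)\, f) * g \;=\; (h \circ t) \cdot (f * g), \qquad (f * g) \cdot (k \circ s) \;=\; f * \bigl( g \cdot (k \circ s) \bigr), \]
which are immediate from the convolution formula~\eqref{convdef}. Because $X$ is invariant, $G_X = s^{-1}(X) = t^{-1}(X)$, so Theorem~\ref{technicaltool} applies equally well to the source and target submersions $s, t : G \to M$. Hence every $f \in J_r$ admits both a ``target'' decomposition $f = \sum_i (h_i \circ t) f_i'$ and a ``source'' decomposition $f = \sum_j f_j''\, (k_j \circ s)$, with $h_i, k_j \in I_r(M)$ and $f_i', f_j'' \in C_c^\infty(G)$. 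From this one easily confirms that each $J_p$ is an ideal for convolution and that $J_k * J_1 \subseteq J_{k+1}$ for every $k$, and iterating gives $(J_1)^{*p} \subseteq J_p$.

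For the substantive reverse inclusions, the plan is to handle the base cases $J_2 \subseteq J_1 * J_1$ and $J_\infty \subseteq J_\infty * J_\infty$ simultaneously by the same four-step mechanism. Given $\varphi \in J_p$ with $p \in \{2, \infty\}$: \textbf{(i)} apply Theorem~\ref{technicaltool} in the form $J_p = I_1(M) \cdot J_{p-1}$ (respectively $J_\infty = I_\infty(M) \cdot J_\infty$) using the target submersion, writing $\varphi = \sum_i (h_i \circ t)\, \xi_i$ with $h_i$ of appropriate vanishing order on $X$ and $\xi_i \in J_1$ (respectively $J_\infty$); \textbf{(ii)} apply Theorem~\ref{technicaltool} to each $\xi_i$, this time using the source submersion, to write $\xi_i = \sum_j \nu_{ij}\, (k_{ij} \circ s)$ with $\nu_{ij} \in C_c^\infty(G)$ and $k_{ij}$ of appropriate vanishing order; \textbf{(iii)} apply Dixmier-Malliavin (Theorem~\ref{mainprecis}) to each $\nu_{ij}$, obtaining $\nu_{ij} = \sum_\ell f_{ij\ell} * \rho_{ij\ell}$; and \textbf{(iv)} combine using both pullout identities to rearrange
\[ (h_i \circ t)\, \xi_i \;=\; \sum_{j, \ell} \bigl( (h_i \circ t)\, f_{ij\ell} \bigr) * \bigl( \rho_{ij\ell}\, (k_{ij} \circ s) \bigr). \]
The left convolution factor lies in $J_1$ (respectively $J_\infty$) thanks to $(h_i \circ t)$, and the right factor lies there thanks to $(k_{ij} \circ s)$. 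Summing over $i$ expresses $\varphi$ as a finite sum of convolution products of two $J_1$ (respectively $J_\infty$) elements.

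The full statement $(J_1)^{*p} = J_p$ for $p \geq 3$ will then follow by induction on $p$. Assuming the result for $p - 1$, take $\varphi \in J_p$ and use Theorem~\ref{technicaltool} to write $\varphi = \sum_i (h_i \circ t)\, \xi_i$ with $h_i \in I_1(M)$ and $\xi_i \in J_{p-1}$. The inductive hypothesis expresses each $\xi_i$ as a sum of $(p-1)$-fold convolution products $\alpha_1 * \cdots * \alpha_{p-1}$ of elements of $J_1$, and the target pullout gives $(h_i \circ t)(\alpha_1 * \cdots * \alpha_{p-1}) = ((h_i \circ t) \alpha_1) * \alpha_2 * \cdots * \alpha_{p-1}$. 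The factor $(h_i \circ t) \alpha_1$ is a pointwise product of two $J_1$ elements and so lies in $J_2$, which by the base case equals $J_1 * J_1$; splitting it accordingly promotes each $(p-1)$-fold convolution to a $p$-fold convolution of $J_1$ elements.

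I expect the main obstacle to be the bookkeeping in step (iv) of the base argument: one must apply the two pullout identities in the correct order so that $(h_i \circ t)$ ends up in the left convolution factor and $(k_{ij} \circ s)$ ends up in the right. All of the heavy analytic content is already packaged into Theorem~\ref{technicaltool} (the commutative analog) and Theorem~\ref{mainprecis} (Dixmier-Malliavin for Lie groupoids), so once the pullouts are in place the rest is elementary algebraic manipulation.
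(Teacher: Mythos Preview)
Your proposal is correct and uses exactly the same ingredients as the paper: the pullout identities, Theorem~\ref{technicaltool} (applied to both $s$ and $t$), and Theorem~\ref{mainprecis}. The paper packages this slightly more efficiently by proving the single identity $J_p * J_q = J_{p+q}$ for all $p,q \in \N \cup \{\infty\}$ in one line, $J_p * J_q = I_p \cdot C_c^\infty(G) * C_c^\infty(G) \cdot I_q = I_p \cdot C_c^\infty(G) \cdot I_q = J_p \cdot I_q = J_{p+q}$, which subsumes your base cases and your induction simultaneously; your steps (i)--(iv) are exactly this chain unwound.
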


%Consider the ideals
%\[ C_c^\infty(M) = I_0 \supset I_1  \supset I_2 \supset I_3 \supset \ldots \supset I_\infty \]
%  in $C_c^\infty(M)$  where $I_p \subset C_c^\infty(M)$ denotes the ideal of functions that vanish to $p$th order on $X$. By definition, $f \in I_p$ means that $Df$ vanishes on $X$ for all differential operators $D$ on $M$ with $\mathrm{order}(D) < p$.   For example, $I_1$ is just the functions that vanish on $X$ and the quotient $C_c^\infty(M)/I_1$ is isomorphic to $C_c^\infty(X)$ via the restriction map. The  quotients $C_c^\infty(M)/I_p$ for $p > 1$ can be thought of as extensions of $C_c^\infty(X)$, fitting as they do into exact sequences of the form 
%\[ 0 \to I_1/I_p \to C_c^\infty(M)/I_p \to C_c^\infty(X) \to 0. \] 
%Thus, for instance,   elements of $C_c^\infty(M) / I_\infty$ correspond to functions on $X$, plus some power series data in directions normal to $X$. 

%It is clear that these ideals satisfy $I_p I_q \subset I_{p+q}$. The proposition below asserts that this containment  can, in fact, be sharpened to an equality. In particular, $(I_\infty)^2= I_\infty$ and $(I_1)^p=I_p$ for every positive integer $p$.  The case where $p,q$ are finite is quite straightforward---one needs just  apply Taylor's theorem locally. The relation $(I_\infty)^2 = I_\infty$ (which implies all the remaining cases) is more technical, however, and we shall postphone the proof until the next section.

As in the preceding section, for the sake of efficiency, we shall in fact prove a more general result which treats the cases of finite and infinite vanishing order on equal footing.

\begin{thm}\label{applthm}
Let $G \rightrightarrows M$ be a Lie groupoid with a given Haar system. Let $X$ be an invariant, closed submanifold of $M$ and let $G_X \colonequals s^{-1}(X) = t^{-1}(X)$. For $p \in \N \cup \{\infty\}$, let $J_p  \subset  C_c^\infty(G)$ denote the ideal of functions that vanish to order $p$ on $G_X$. Then,  
\[J_{p+q} =  J_p*J_q \] 
holds for all $p,q \in \N \cup \{\infty\}$. 
\end{thm}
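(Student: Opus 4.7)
The plan is to combine the pointwise result (Theorem~\ref{technicaltool}) with the Dixmier--Malliavin theorem (Theorem~\ref{mainprecis}) using the two ``pullout'' identities
\begin{align*}
((h \circ t) f) * g = (h \circ t)(f * g), && ((h \circ s) f) * g = f * ((h \circ t) g),
\end{align*}
valid for $h \in C_c^\infty(M)$ and $f,g \in C_c^\infty(G)$. Both follow directly from the convolution formula \eqref{convdef}: on the domain of integration $G_{t(\gamma_0)}$ one has $s(\gamma) = t(\gamma_0)$ (constant in $\gamma$) and $t(\gamma) = t(\gamma \gamma_0)$. Also, since $t$ is a submersion and $G_X = t^{-1}(X)$, any $g \in C_c^\infty(M)$ vanishing to order $p$ on $X$ pulls back to a function $g \circ t \in J_p$ (similarly for $s$).

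For the easy inclusion $J_p * J_q \subset J_{p+q}$, apply Theorem~\ref{technicaltool} with $\pi = s$ to any $f \in J_p$, writing $f = \sum_i (a_i \circ s) b_i$ with $a_i \in C_c^\infty(M)$ vanishing to order $p$ on $X$ and $b_i \in C_c^\infty(G)$. The second pullout identity gives $f * g = \sum_i b_i * ((a_i \circ t) g)$; since $(a_i \circ t) g$ is a pointwise product of functions vanishing to orders $p$ and $q$ on $G_X$ it lies in $J_{p+q}$, and because $J_{p+q}$ is a left ideal each summand stays in $J_{p+q}$. For the main inclusion $J_{p+q} \subset J_p * J_q$, let $\varphi \in J_{p+q}$. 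First, apply Theorem~\ref{technicaltool} with $\pi = t$ to write $\varphi = \sum_i (g_i \circ t) h_i$ with $g_i \in C_c^\infty(M)$ vanishing to order $p$ on $X$ and $h_i \in J_q$. Next, apply Theorem~\ref{mainprecis} to each $h_i$, viewed as a function on the $G$-space $G$ under self-action, to obtain $h_i = \sum_j c_{ij} * d_{ij}$ with each $d_{ij} \in J_q$ (see the next paragraph). The first pullout identity finally yields $\varphi = \sum_{i,j} ((g_i \circ t) c_{ij}) * d_{ij}$, in which $(g_i \circ t) c_{ij} \in J_p$, so $\varphi \in J_p * J_q$.

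The hardest step, and the only one requiring more than a routine calculation, is justifying that Theorem~\ref{mainprecis} applied to an input in $J_q$ can be arranged so that the second factors remain in $J_q$. The key geometric input is $G$-invariance of $X$: it forces the anchor $AG \to TM$ to send $AG|_X$ into $TX$, so that for any $Y \in C^\infty(B, AG)$ the vector field $Y^M$ is tangent to $X$ along $X$, whence by $t$-equivariance of flows the right-invariant flow $e^{tY}$ on $G$ preserves $G_X$. A local coordinate check then shows $Y$ acts as a derivation on $J_q$. Now trace through the existing proof of Theorem~\ref{mainprecis}: it reduces via Lemma~\ref{horrid} to iterated applications of Theorem~\ref{linefac}, at each step of which the ``$\psi$'' factors are built from the current input by a convergent power series in iterates $Y^k(\cdot)$ of such vector fields; since $J_q$ is closed under these derivations and under $C^\infty$-convergence of compactly-supported sums, all intermediate $\psi$'s, and hence the final $d_{ij}$, remain in $J_q$.
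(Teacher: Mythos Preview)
Your argument is correct, but the paper's proof is shorter and avoids what you call the ``hardest step.'' Rather than refining Theorem~\ref{mainprecis} so that the second factors stay in $J_q$, the paper applies Theorem~\ref{technicaltool} \emph{symmetrically}: with $\pi=t$ to get $J_p = I_p \cdot C_c^\infty(G)$ and with $\pi=s$ to get $J_q = C_c^\infty(G)\cdot I_q$, where $f\cdot\varphi=(f\circ t)\varphi$ and $\varphi\cdot f=\varphi(f\circ s)$. Then, using only the unrefined Dixmier--Malliavin theorem $C_c^\infty(G)*C_c^\infty(G)=C_c^\infty(G)$ together with the two pullout identities (the paper also uses $(\varphi*\psi)\cdot f = \varphi*(\psi\cdot f)$, which is just as easy as the two you stated), one obtains
\[
J_p*J_q = I_p\cdot C_c^\infty(G)*C_c^\infty(G)\cdot I_q = I_p\cdot C_c^\infty(G)\cdot I_q = J_p\cdot I_q = J_{p+q}
\]
in one line. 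Your route, by contrast, re-enters the proof of Theorem~\ref{mainprecis} to check that right-invariant vector fields are tangent to $G_X$ (hence preserve each $J_q$) and that $J_q$ is closed in the relevant $C^\infty$ topology; this is all correct and gives the pleasant bonus that Dixmier--Malliavin factorizations of elements of $J_q$ can be taken with both second factors in $J_q$, but it is not needed for the theorem as stated. Incidentally, your tangency argument via the anchor and $t$-equivariance is fine, but the shortest justification is simply that right-invariant vector fields are tangent to the source fibers, and $G_X=s^{-1}(X)$ is a union of source fibers.
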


 %In particular, $J_\infty * J_\infty = J_\infty$ and $J_p = (J_1)^{*p}$ for every positive integer $p$. 

It is easy to see that  $J_p * J_q \subset J_{p + q}$ is satisfied (again, either by arguing directly  or by applying   Proposition~\ref{pullout} below). The goal is therefore to sharpen these containments to equalities.  Note that, whereas in the commutative setting the  $p=q=0$ is trivial, in  Theorem~\ref{applthm} above the $p=q=0$ case is exactly Theorem~\ref{mainprecis},  our extension of the Dixmier-Malliavin theorem . Conversely, Theorem~\ref{mainprecis}, in tandem with Proposition~\ref{pullout} below, reduces the proof of Theorem~\ref{applthm} to a formal manipulation. 

Recall that  $C_c^\infty(G)$ is a $C_c^\infty(M)$-bimodule with respect to the products defined by
\begin{align*}
f \cdot \varphi = (f \circ t) \varphi && \varphi \cdot f = \varphi (f \circ s)
\end{align*}
and, moreover, that these products satisfy the expected associativity identities
\begin{align*}
f \cdot (\varphi * \psi) = (f \cdot \varphi) * \psi && (\varphi * \psi) \cdot f = \varphi *(\psi \cdot f),
\end{align*}
where $f \in C_c^\infty(M)$ and $\varphi, \psi \in C_c^\infty(G)$.

The following proposition shows that the ideals $I_p \subset C_c^\infty(M)$ of functions vanishing to $p$th order on $X$ determine the ideals $J_p \subset C_c^\infty(G)$ of functions vanishing to $p$th order on $G_X$ by way of this module structure;  one may write $J_p = I_p \cdot C_c^\infty(G) = C_c^\infty(G) \cdot I_p$. It is a quick corollary of the results in the preceding section.
%Like  Proposition~\ref{I2=I} above, Proposition~\ref{pullout} is somewhat technical and we have placed the substantial part of its proof in the following section.
\begin{propn}\label{pullout}
Let $G \rightrightarrows M$ be Lie groupoid with a given Haar system. Let $X$ be an invariant closed submanifold of $M$. For each $p \in \N \cup \{\infty\}$, let $I_p \subset C_c^\infty(M)$ and $J_p \subset C_c^\infty(G)$ denote the collection of functions vanishing to $p$th order on $X$ and $G_X$ respectively. Then,  
\[ J_{p+q} = I_p \cdot J_q= J_q \cdot I_p \]
holds for all $p,q \in \N \cup \{\infty\}$.
\end{propn}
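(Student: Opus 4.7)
The proof should be a very quick corollary of Theorem~\ref{technicaltool}, applied twice: once with the submersion $t\colon G\to M$ and once with the submersion $s\colon G\to M$. The invariance hypothesis $s^{-1}(X)=t^{-1}(X)=G_X$ is exactly what ensures that both applications refer to the same ideal $J_p\subset C_c^\infty(G)$, so that the two resulting equalities can be combined.

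In detail, the plan is as follows. First, take $N=G$, $M=M$, $\pi=t$ in Theorem~\ref{technicaltool}. The target map $t$ is a submersion by our standing conventions on Lie groupoids, and $X\subset M$ is a closed submanifold by hypothesis. Because $X$ is invariant, the preimage $t^{-1}(X)$ equals $G_X$, and so the ideals called ``$J_p$'' in Theorem~\ref{technicaltool} coincide with the ideals $J_p\subset C_c^\infty(G)$ appearing in the statement of Proposition~\ref{pullout}. The $C_c^\infty(M)$-module structure on $C_c^\infty(G)$ appearing in Theorem~\ref{technicaltool}, namely $f\cdot \varphi=(f\circ\pi)\varphi$, matches the left module structure $f\cdot\varphi=(f\circ t)\varphi$ specified in the present section. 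Theorem~\ref{technicaltool} then delivers the equality $J_{p+q}=I_p\cdot J_q$, which is precisely the first half of the claim.

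For the other equality, I would repeat the argument with $\pi=s$: again $s$ is a submersion, and invariance gives $s^{-1}(X)=G_X$, so the same $J_p$ arises. The resulting module structure is $f\cdot\varphi=(f\circ s)\varphi$, which coincides with the right module structure $\varphi\cdot f=\varphi(f\circ s)$ from the present section (pointwise multiplication on $G$ is commutative, so there is no real left/right distinction at the level of the module product, only at the level of notation when one wants compatibility with convolution). A second application of Theorem~\ref{technicaltool} therefore gives $J_{p+q}=J_q\cdot I_p$.

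There is essentially no obstacle here: all of the hard analytic work was carried out in Section~6 in the proof of Theorem~\ref{technicaltool}, and the only thing to verify at this stage is the bookkeeping, namely (i)~that $s$ and $t$ are submersions so that Theorem~\ref{technicaltool} applies, and (ii)~that invariance of $X$ allows us to identify $s^{-1}(X)$ and $t^{-1}(X)$ with the common set $G_X$, so that the same $J_p$ appears on both sides of the two applications.
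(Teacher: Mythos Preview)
Your proposal is correct and matches the paper's own proof exactly: the paper simply says to apply Theorem~\ref{technicaltool} with $N=G$ and $\pi=s$, respectively $\pi=t$. Your additional remarks about invariance of $X$ and the identification of the two module structures with the left/right actions $f\cdot\varphi=(f\circ t)\varphi$ and $\varphi\cdot f=\varphi(f\circ s)$ are precisely the bookkeeping the paper leaves implicit.
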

\begin{proof}
Apply Theorem~\ref{technicaltool} with  $N=G$ and $\pi = s$, respectively $\pi = t$. 
\end{proof}

Theorem~\ref{applthm} is now a trivial consequence of  Theorem~\ref{mainprecis} and Proposition~\ref{pullout}.

\begin{proof}[Proof of Theorem~\ref{applthm}]
We have
\[ J_p * J_q = I_p \cdot C_c^\infty(G) * C_c^\infty(G) \cdot I_q =I_p \cdot C_c^\infty(G) \cdot I_q = J_p \cdot I_q = J_{p+q}, \]
where the second equality holds by Theorem~\ref{mainprecis} and the rest  hold by Proposition~\ref{pullout}.
\end{proof}

\bibliographystyle{abbrv}
\bibliography{DMBib}

\end{document}